\newcommand{\calA}{{\mathcal A}}
\newcommand{\calH}{{\mathcal H}}
\newcommand{\calD}{{\mathcal D}}
\newcommand{\calP}{{\mathcal P}}
\newcommand{\calS}{{\mathcal S}}
\newcommand{\calU}{{\mathcal U}}
\newcommand{\N}{{\mathbb N}}
\newcommand{\Q}{{\mathbb Q}}
\newcommand{\Frac}[2]{\displaystyle \frac{#1}{#2}}
\newcommand{\Sum}[2]{\displaystyle{\sum_{#1}^{#2}}}
\newcommand{\Prod}[2]{\displaystyle{\prod_{#1}^{#2}}}
\def\pol#1{\langle #1 \rangle}
\def\Lyn{{\mathcal Lyn}}
\def\shuffle{\mathop{_{^{\sqcup\!\sqcup}}}}
\gdef\stuffle{\;%
  \setlength{\unitlength}{0.0125cm}%
  \begin{picture}(20,10)(220,580) 
  \thinlines 
  \put(220,592){\line( 0,-1){ 10}} 
  \put(220,582){\line( 1, 0){ 20}} 
  \put(240,582){\line( 0, 1){ 10}} 
  \put(230,592){\line( 0,-1){ 10}} 
  \put(225,587){\line( 1, 0){ 10}} 
  \end{picture}\; 
}
\def\deg{\mathrm{deg}}
\newcommand{\poly}[2]{#1 \langle #2 \rangle}
\def\QY{\poly{\Q}{Y}}
\def\kY{\poly{{\bf k}}{Y}}
\newcommand{\serie}[2]{#1 \langle \! \langle #2 \rangle \! \rangle}
\def\kYY{\serie{{\bf k}}{Y}}
\def\pol#1{\langle #1 \rangle}
\def\Lie{{\cal L}ie}
\def\LQX{\Lie_{\Q} \langle X \rangle}
\def\LQY{\Lie_{\Q} \langle Y \rangle}
\def\up#1{\raise 1ex\hbox{\footnotesize#1}}
\def\Sum{\displaystyle\sum}
\def\Prod{\displaystyle\prod}
\def\Frac{\displaystyle\frac}
\def\path{\rightsquigarrow}
\def\bv{\mid}
\gdef\ministuffle{{\scriptstyle \stuffle}}
\def\scal#1#2{\langle #1\bv#2 \rangle}
\def\deg{\mathop\mathrm{deg}\nolimits}
\def\supp{\mathop\mathrm{supp}\nolimits}
\def\2#1{\ifnum#1<10 0\fi\the#1}
\xdef\isodayandtime{
{\2\day-\2\month-\the\year\space\2{\count0}:%
\2{\count2}}}
\title{Sch\"utzenberger's factorization on the\\ 
(completed) Hopf algebra of $q-$stuffle product}
\titlerunning{Hopf algebra of $q-$stuffle product}
\author{C. Bui$^{\flat}$, G. H. E. Duchamp$^{\sharp}$, V. Hoang Ngoc Minh$^{\lozenge,\sharp}$}
\date{}
\authorrunning{V.C. Bui, G. H. E. Duchamp, V. Hoang Ngoc Minh}
\institute{$^{\flat}$Hu\'e University - College of sciences, 77 - Nguyen Hue street - Hu\'e city, Vi\^et Nam\\
$^{\sharp}$Institut Galil\'ee, LIPN - UMR 7030, CNRS - Universit\'e Paris 13, F-93430 Villetaneuse, France,\\
$^{\lozenge}$Universit\'e Lille II, 1, Place D\'eliot, 59024 Lille, France}
\begin{document}

\maketitle

\begin{abstract}
In order to extend the Sch\"utzenberger's factorization, the combinatorial Hopf algebra of the $q$-stuffles product is developed systematically in a parallel way with that of the shuffle product and and in emphasizing the Lie elements as studied
by Ree. In particular, we will give here an effective construction of pair of bases in duality. 
[\isodayandtime]

\smallskip
{\bf Keywords} : Shuffle, Lyndon words, Lie elements, transcendence bases.
\end{abstract}

\section{Introduction}

Sch\"utzenberger's factorization \cite{schutz,reutenauer} has been introduced and plays a central role in the renormalization \cite{acta} of associators\footnote{The associators were introduced in quantum field theory by Drinfel'd \cite{drinfeld1,drinfeld2} and the universal  Drinfel'd associator, {\it i.e.} $\Phi_{KZ}$, was obtained,  in \cite{lemurakami}, with explicit coefficients which are polyz\^etas and regularized polyz\^etas (see \cite{acta} for the computation of the other associators involving only convergent polyz\^etas as local coordinates, and for three algorithmical process to regularize the divergent polyz\^etas).} which are formal power series in  non commutative variables \cite{berstel_reutenauer}. The coefficients of these power series are polynomial at positive integral multi-indices of Riemann's z\^eta function\footnote{These values are usually abbreviated MZV's by Zagier \cite{zagier} and are also called polyz\^etas by Cartier \cite{cartier2}.}  \cite{lemurakami,zagier} and they satisfy quadratic relations \cite{cartier2} which can be explained through the Lyndon words  \cite{berstel_perrin,lothaire,lyndon,radford}.

These quadratic relations can be obtained by identification of the local coordinates, in infinite dimension, on a bridge equation connecting the Cauchy and Hadamard algebras of the polylogarithmic functions and using the factorizations, by Lyndon words,  of the non commutative generating series of polylogarithms \cite{FPSAC99} and of harmonic sums \cite{acta}. This bridge equation is mainly a consequence of the double isomorphy between  these algebraic structures to respectively the shuffle \cite{FPSAC99} and quasi-shuffle (or stuffle) \cite{words03} algebras both admitting the Lyndon words as a transcendence basis\footnote{Our method applies also to any other transcendence basis built by duality from PBW, see below.} \cite{radford,MalvenutoReutenauer}.

In order to better understand the mechanisms of the shuffle product and to obtain algorithms on quasi-shuffle products, we will examine, in the section below, the commutative $q$-stuffle product interpolating between the shuffle \cite{ree}, quasi-shuffle (or stuffle \cite{MalvenutoReutenauer}) and minus-stuffle products \cite{costermans,JSC}, obtained for\footnote{In \cite{costermans}, the letter $\lambda$ is used instead of $q$.} $q=0,1$ and $-1$ respectively. We will extend the Sch\"utzenberger's factorization by developping the combinatorial Hopf algebra of this product in a parallel way with that of the shuffle and in emphasizing  the Lie elements studied by Ree \cite{ree}. In particular, we will give an effective construction (implemented in Maple \cite{Bui}) of pair of bases in duality (see Propositions \ref{checkSigma} and \ref{increasinglyndon}).

This construction uses essentially an adapted version of the Eulerian projector and its adjoint \cite{reutenauer} in order to obtain the primitive elements of the $q$-stuffle Hopf algebra (see Definition \ref{pi1}). They are obtained thanks to the computation of the logarithm of the diagonal series (see Proposition \ref{logD}). This study completes the treatement for the stuffle \cite{acta} and boils down to the shuffle case for $q=0$ \cite{reutenauer}.

Let us remark that it is quite different from other studies \cite{NSFC3,hoffman} concerning non commutative $q$-shuffle products interpolating between the concatenation and shuffle products, for $q=0$ and $1$ respectively and using the $q$-deformation theory of non commutative symmetric functions\footnote{Recall also that the algebra of non commutative symmetric functions, denoted by $\bf Sym$ is the Solomon descent algebra \cite{solomon} and it is dual to  the algebra of quasi-symmetric functions, denoted by $\bf QSym$ which is isomorphic to the quasi-shuffle algebra \cite{MalvenutoReutenauer}.\\
Thus our construction of pair of bases in duality are also suitable for $\bf Sym$ and $\bf QSym$ (and their deformations, provided they remain graded connected cocommutative Hopf algebras).} \cite{NSFC3}.

\section{$q$-deformed stuffle}

\subsection{Results for the $q$-deformed stuffle}
Let ${\bf k}$ be a unitary $\Q$-algebra containing $q$.
Let also $Y=\{y_s\}_{ s\ge1}$ be an alphabet with the total order
\begin{eqnarray}
y_1>y_2>\cdots.
\end{eqnarray}

One defines the $q$-stuffle, by a recursion or by its dual co-product $\Delta_{\stuffle_q}$, as follows. For any $y_s,y_t\in Y$ and for any $u,v\in Y^*$,
 \begin{eqnarray}\label{Delta}
   u\stuffle_q 1_{Y^*}=1_{Y^*}\stuffle_q u=u&\mbox{and}&
   y_su\stuffle_q y_tv=y_s(u\stuffle_q y_tv)+y_t(y_su\stuffle_q v) +qy_{s+t}(u\stuffle_q v),\\
   \Delta_{\stuffle_q}(1_{Y^*})=1_{Y^*}\otimes1_{Y^*}&\mbox{and}&
   \Delta_{\stuffle_q}(y_s)=y_s\otimes 1_{Y^*} +1_{Y^*}\otimes y_s +q\sum_{s_1+s_2=s}{y_{s_1}\otimes y_{s_2}}.
 \end{eqnarray}
This product is commutative, associative and unital  (the neutral being the empty word $1_{Y^*}$).
With the co-unit defined by, for any $P\in\kY$, 
\begin{eqnarray}
\epsilon(P)=\scal{P}{1_{Y^*}}
\end{eqnarray}
one gets $\calH_{{\stuffle_q}}=(\kY,{\tt conc},1_{Y^*},\Delta_{{\stuffle_q}},\epsilon)$
and $\calH_{{\stuffle_q}}^{\vee}=(\kY,{\stuffle_q},1_{Y^*},\Delta_{\tt conc},\epsilon)$
which are mutually dual bialgebras and, in fact, Hopf algebras because they are $\N$-graded by the weight, defined by
\begin{eqnarray}
\forall w=y_{i_1}\ldots y_{i_r}\in Y^+,&&(w)=i_1+\ldots+i_r.
\end{eqnarray}

\begin{lemma}[Friedrichs criterium]\label{lemme}
Let $S\in\kYY$ (for (2), we suppose in addition that $\scal{S}{1_{Y^*}}=1$). Then,
\begin{enumerate}
\item\label{1} $S$ is {\em primitive}, {\it i.e.} $\Delta_{\ministuffle_q}S=S\otimes1_{Y^*}+1_{Y^*}\otimes S$, if and only if,
for any $u,v\in Y^+$, $\scal S{u{\stuffle_q} v}=0$.
\item\label{2} $S$ is {\em group-like}, {\it i.e.} $\Delta_{\ministuffle_q}S=S\otimes S$, if and only if,
for any $u,v\in Y^+$, $\scal{S}{u{\stuffle_q} v}=\scal{S}{u}\scal{S}{v}$.
\end{enumerate}
\end{lemma}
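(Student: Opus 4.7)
The plan is to reduce the lemma to the mutual duality of $\calH_{\stuffle_q}$ and $\calH_{\stuffle_q}^\vee$ that has just been set up. The natural coefficient pairing $\scal{\cdot}{\cdot} : \kYY \times \kY \to {\bf k}$ extends on elementary tensors by $\scal{A \otimes B}{u \otimes v} = \scal{A}{u}\scal{B}{v}$, and since $\Delta_{\stuffle_q}$ has been introduced as the transpose of $\stuffle_q$, I would first record the key duality identity
\[
\scal{\Delta_{\stuffle_q} S}{u \otimes v} = \scal{S}{u \stuffle_q v} \quad \text{for all } u, v \in Y^*, \; S \in \kYY.
\]
The family $\{u \otimes v\}_{u,v \in Y^*}$ is separating for the completion $\kYY \hat\otimes \kYY$, so any identity in that completion can be tested coefficient-wise via this pairing.

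For part \ref{1}, I would apply the identity with $\Delta_{\stuffle_q} S$ replaced by $S \otimes 1_{Y^*} + 1_{Y^*} \otimes S$: its pairing with $u \otimes v$ equals $\scal{S}{u}\,\delta_{v,1_{Y^*}} + \delta_{u,1_{Y^*}}\,\scal{S}{v}$. Comparing with $\scal{S}{u\stuffle_q v}$, the two boundary cases in which exactly one of $u,v$ is $1_{Y^*}$ are automatic from $1_{Y^*} \stuffle_q w = w$; the case $u = v = 1_{Y^*}$ is consistent because primitivity forces $\scal{S}{1_{Y^*}} = 0$ (apply $\epsilon \otimes \mathrm{id}$ to $\Delta_{\stuffle_q} S$). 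All non-trivial content therefore sits in the equations $\scal{S}{u \stuffle_q v} = 0$ for $u, v \in Y^+$, which is the stated criterion.

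For part \ref{2}, I would rerun the same argument with $S \otimes 1_{Y^*} + 1_{Y^*} \otimes S$ replaced by $S \otimes S$, so that the pairing on the right becomes $\scal{S}{u}\scal{S}{v}$; the group-like condition then translates to the multiplicativity $\scal{S}{u \stuffle_q v} = \scal{S}{u}\scal{S}{v}$ for all $u, v \in Y^*$. The hypothesis $\scal{S}{1_{Y^*}} = 1$ combined with $1_{Y^*} \stuffle_q w = w$ dispatches the boundary cases trivially, leaving exactly the stated condition on $Y^+$. The only point that requires care rather than calculation is the very first one — ensuring that $\Delta_{\stuffle_q} S$ really is determined by its pairings with elementary tensors, which boils down to the fact that $\stuffle_q$ preserves the weight grading so that $\Delta_{\stuffle_q}$ lands in the formal completion and no convergence issue arises. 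Once this is in place, both implications are immediate in both directions, so I do not anticipate a serious obstacle.
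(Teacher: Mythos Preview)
Your proposal is correct and follows essentially the same route as the paper: both arguments rest on the duality identity $\scal{\Delta_{\stuffle_q}S}{u\otimes v}=\scal{S}{u\stuffle_q v}$ and then compare coefficients, the paper by writing out the full expansion $\Delta_{\stuffle_q}S=S\otimes1_{Y^*}+1_{Y^*}\otimes S-\scal{S}{1_{Y^*}}\,1_{Y^*}\otimes1_{Y^*}+\sum_{u,v\in Y^+}\scal{S}{u\stuffle_q v}\,u\otimes v$ (and the analogous one for $S\otimes S$), you by pairing against elementary tensors. The only cosmetic difference is that the paper's explicit correction term $-\scal{S}{1_{Y^*}}\,1_{Y^*}\otimes1_{Y^*}$ makes visible the constant-term issue you handle via $\epsilon\otimes\mathrm{id}$.
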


\begin{proof}
The expected equivalence is due respectively to the following facts
\begin{eqnarray*}
&&\Delta_{\ministuffle_q}S=S\otimes1_{Y^*}+1_{Y^*}\otimes S-\scal S{1_{Y^*}\otimes1_{Y^*}}1_{Y^*}\otimes1_{Y^*}+\sum_{u,v\in Y^+}\scal S{u{\stuffle_q} v}u\otimes v,\\
&&\Delta_{\ministuffle_q}S=\sum_{u,v\in Y^*}\scal S{u{\stuffle_q} v}u\otimes v
\quad\mbox{and}\quad
S\otimes S=\sum_{u,v\in Y^*}\scal{S}{u}\scal{S}{v}u\otimes v.
\end{eqnarray*}
\end{proof}

\begin{lemma}\label{lemme2}
Let $S\in\kYY$ such that $\scal{S}{1_{Y^*}}=1$. Then, for the co-product $\Delta_{\ministuffle_q}$, $S$ is group-like if and only if $\log S$ is primitive.
\end{lemma}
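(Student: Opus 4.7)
The plan is to exploit the fact that $\Delta_{\stuffle_q}$ is, on the dual Hopf algebra $\calH_{\stuffle_q}=(\kY,{\tt conc},1_{Y^*},\Delta_{\stuffle_q},\epsilon)$, a morphism of algebras for the concatenation product, which extends continuously to the completion $\kYY$ equipped with the degree (or weight) filtration. Because $\scal{S}{1_{Y^*}}=1$, we may write $S=1_{Y^*}+S_+$ with $S_+$ of positive valuation, so that $\log S=\sum_{n\ge1}(-1)^{n-1}S_+^n/n$ and $\exp(\log S)=S$ are well-defined in $\kYY$. Similarly $\log(\Delta_{\stuffle_q}S)=\log(1+\Delta_{\stuffle_q}S_+)$ makes sense in the completed tensor product $\kYY\,\hat\otimes\,\kYY$, and since $\Delta_{\stuffle_q}$ is a continuous algebra morphism it commutes with $\log$: $\Delta_{\stuffle_q}(\log S)=\log(\Delta_{\stuffle_q}S)$.

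First I would establish the key commutation fact: in $\kYY\,\hat\otimes\,\kYY$ endowed with the concatenation product on each factor, the elements $S\otimes 1_{Y^*}$ and $1_{Y^*}\otimes S$ commute, as do $T\otimes 1_{Y^*}$ and $1_{Y^*}\otimes T$ for any $T$. Hence the classical identities
\begin{eqnarray*}
\log\bigl((A\otimes 1_{Y^*})(1_{Y^*}\otimes B)\bigr)=\log A\otimes 1_{Y^*}+1_{Y^*}\otimes\log B,\\
\exp(U\otimes 1_{Y^*}+1_{Y^*}\otimes V)=\exp U\otimes\exp V
\end{eqnarray*}
are valid in the completion for $A,B$ with constant term $1$ and $U,V$ with zero constant term.

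For the direct implication, assume $S$ is group-like, so $\Delta_{\stuffle_q}S=S\otimes S=(S\otimes 1_{Y^*})(1_{Y^*}\otimes S)$. Applying $\log$ and using the commutation identity above gives
\begin{eqnarray*}
\Delta_{\stuffle_q}(\log S)=\log(\Delta_{\stuffle_q}S)=\log S\otimes 1_{Y^*}+1_{Y^*}\otimes\log S,
\end{eqnarray*}
so $\log S$ is primitive. For the converse, set $T=\log S$ and assume $T$ is primitive, $\Delta_{\stuffle_q}T=T\otimes 1_{Y^*}+1_{Y^*}\otimes T$. Then $S=\exp T$ and
\begin{eqnarray*}
\Delta_{\stuffle_q}S=\exp(\Delta_{\stuffle_q}T)=\exp(T\otimes 1_{Y^*}+1_{Y^*}\otimes T)=\exp T\otimes\exp T=S\otimes S,
\end{eqnarray*}
so $S$ is group-like.

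The only real subtlety — hence the step I would carry out most carefully — is the convergence and morphism property in the completed algebra: checking that $\Delta_{\stuffle_q}$ is continuous for the $\ser{.}$-topology induced by the weight grading, that the power series defining $\log$ and $\exp$ converge because $S_+$ and $T$ have positive valuation, and that $\Delta_{\stuffle_q}$ being an algebra morphism for concatenation indeed justifies exchanging it with $\log$ and $\exp$. Once this functional-analytic frame is in place, the algebraic computation above closes the proof.
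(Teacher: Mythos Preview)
Your proposal is correct and follows essentially the same approach as the paper: both arguments rest on the continuity of $\Delta_{\stuffle_q}$ as a concatenation-algebra morphism (so it commutes with $\exp$ and $\log$) together with the commutation of $T\otimes 1_{Y^*}$ and $1_{Y^*}\otimes T$. The paper spells out only the direction ``$\log S$ primitive $\Rightarrow$ $S$ group-like'' via the $\exp$ computation and dismisses the converse as analogous, whereas you write out both directions and are more explicit about the completion and convergence issues; but the underlying argument is the same.
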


\begin{proof}
Since $\Delta_{\ministuffle_q}$ and the maps  $T\mapsto T\otimes1_{Y^*},T\mapsto 1_{Y^*}\otimes T$ are continous homomorphisms then if $\log S$ is primitve then, by  Lemma \ref{lemme}, $\Delta_{\ministuffle_q}(\log S)=\log S\otimes1_{Y^*}+1_{Y^*}\otimes\log S$. Since $\log S\otimes1_{Y^*},1_{Y^*}\otimes\log S$ commute then
\begin{eqnarray*}
\Delta_{\ministuffle_q}S
&=&\Delta_{\ministuffle_q}(\exp(\log S))\\
&=&\exp(\Delta_{\ministuffle_q}(\log S))\\
&=&\exp(\log S\otimes1_{Y^*})\exp(1_{Y^*}\otimes\log S)\\
&=&(\exp(\log S)\otimes1_{Y^*})(1_{Y^*}\otimes\exp(\log S))\\
&=&S\otimes S.
\end{eqnarray*}
This means $S$ is group-like. The converse can be obtained in the same way.
\end{proof}

\begin{lemma}\label{L3}
Let $S_1,\ldots,S_n$ be {\em proper} formal power series in $\kYY$.
Let $P_1,\ldots,P_m$ be primitive elements in $\kY$, for the co-product $\Delta_{\ministuffle}$.
\begin{enumerate}
\item\label{1}  If $n>m$ then  $\scal{S_1{\stuffle_q}\ldots{\stuffle_q} S_n}{P_1\ldots P_m}=0.$
\item\label{2}  If $n=m$ then
\begin{eqnarray*}
\scal{S_1{\stuffle_q}\ldots{\stuffle_q} S_n}{P_1\ldots P_n}&=&
\sum_{\sigma\in\mathfrak{S}_n}\prod_{i=1}^n\scal{S_i}{P_{\sigma(i)}}.
\end{eqnarray*}
\item\label{3} If $n<m$ then, by considering the  language $\mathcal{M}$ over the new alphabet $\calA=\{a_1,\ldots,a_m\}$
\begin{eqnarray*}
\mathcal{M}&=&\{w\in\calA^*|w=a_{j_1}\ldots a_{j_{|w|}},j_1<\ldots<j_{|w|},|w|\ge1\}
\end{eqnarray*} 
and the morphism
$\mu:\Q\langle\calA\rangle\longrightarrow\kY$ given by, for any $i=1,\ldots,m,\mu(a_i)=P_i$, one has\label{support}~:
\begin{eqnarray*}
\scal{S_1{\stuffle_q}\ldots{\stuffle_q} S_n}{P_1\ldots P_m}
&=&\sum_{w_1,\ldots,w_m\in\mathcal{M}\atop\supp(w_1{\shuffle}\ldots{\shuffle} w_m)\ni a_1\ldots a_m}
\prod_{i=1}^n\scal{S_i}{\mu(w_i)}.
\end{eqnarray*}
\end{enumerate}
\end{lemma}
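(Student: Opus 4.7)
The plan is to transport the pairing to the coproduct side via duality, expand the iterated coproduct using primitivity of the $P_i$, and then analyze the resulting sum over functions $[m]\to[n]$ case by case.

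First, the mutual duality between $\calH_{\stuffle_q}$ and $\calH_{\stuffle_q}^{\vee}$ gives $\scal{A\stuffle_q B}{w}=\scal{A\otimes B}{\Delta_{\stuffle_q}(w)}$, which extends continuously from $\kY$ to $\kYY$ and iterates to
$$\scal{S_1\stuffle_q\cdots\stuffle_q S_n}{P_1\cdots P_m}=\scal{S_1\otimes\cdots\otimes S_n}{\Delta_{\stuffle_q}^{(n-1)}(P_1\cdots P_m)}.$$
Since $\Delta_{\stuffle_q}$ is a morphism for concatenation and each $P_i$ is primitive, one has $\Delta_{\stuffle_q}^{(n-1)}(P_i)=\sum_{j=1}^n 1^{\otimes(j-1)}\otimes P_i\otimes 1^{\otimes(n-j)}$, whence
$$\Delta_{\stuffle_q}^{(n-1)}(P_1\cdots P_m)=\sum_{f:[m]\to[n]}\bigotimes_{j=1}^n\prod_{i\in f^{-1}(j)}P_i,$$
with the inner product taken in increasing-$i$ order. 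Pairing termwise and using $\scal{S_j}{1_{Y^*}}=0$ (properness) forces only \emph{surjective} $f$ to contribute.

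The three parts now fall out. In (1), $n>m$ precludes any surjection, so the sum is zero. In (2), $n=m$ makes every surjection a bijection; setting $\sigma=f^{-1}\in\mathfrak{S}_n$ collapses each factor to $\scal{S_j}{P_{\sigma(j)}}$. In (3), with $n<m$, to a surjection $f$ I would associate $w_j\in\mathcal{M}$ by $w_j=a_{i_1}\cdots a_{i_{k_j}}$ where $\{i_1<\cdots<i_{k_j}\}=f^{-1}(j)$; then $\mu(w_j)=\prod_{i\in f^{-1}(j)}P_i$, so each $f$-term becomes $\prod_j\scal{S_j}{\mu(w_j)}$. Surjections thus biject with $n$-tuples $(w_1,\ldots,w_n)\in\mathcal{M}^n$ whose letter-sets form an ordered partition of $\{a_1,\ldots,a_m\}$.

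The routine parts are the dualization and the iteration of the coproduct on primitives. The main obstacle, and the step I would check most carefully, is recognizing this last bijection as the shuffle-support condition stated in (3): concretely, $(w_1,\ldots,w_n)\in\mathcal{M}^n$ satisfies $a_1\cdots a_m\in\supp(w_1\shuffle\cdots\shuffle w_n)$ iff the $w_j$ are disjoint increasing subwords of $a_1\cdots a_m$ covering all letters, because the only interleaving of increasing $w_j$'s producing the globally increasing word $a_1\cdots a_m$ is the one prescribed by the block-assignment. Sharpness (no overcounting from multiple interleavings, no missing tuples) is what needs verification, and ensures that passing from the sum over surjective $f$ to the sum over tuples $(w_1,\ldots,w_n)\in\mathcal{M}^n$ with the support condition is indeed an equality.
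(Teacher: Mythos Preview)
Your proof is correct and follows essentially the same approach as the paper: both use the duality $\scal{S_1\stuffle_q\cdots\stuffle_q S_n}{P_1\cdots P_m}=\scal{S_1\otimes\cdots\otimes S_n}{\Delta_{\stuffle_q}^{(n-1)}(P_1\cdots P_m)}$, expand via $\Delta_{\stuffle_q}^{(n-1)}(P_1\cdots P_m)=\prod_i\Delta_{\stuffle_q}^{(n-1)}(P_i)$ with the primitive formula, and then use properness to kill the tensors having a factor $1_{Y^*}$. Your parametrization of the expansion by functions $f:[m]\to[n]$ (with only surjective $f$ surviving) makes explicit what the paper leaves implicit, and your verification in part~(3) that surjections biject with tuples $(w_1,\ldots,w_n)\in\mathcal{M}^n$ satisfying $a_1\cdots a_m\in\supp(w_1\shuffle\cdots\shuffle w_n)$ is more detailed than the paper's one-line ``the expected result follows by expanding the product''.
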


\begin{proof}
On the one hand, since the $P_i$'s are primitive then
\begin{eqnarray*}
\Delta_{\ministuffle_q}^{(n-1)}(P_i)&=&\sum_{p+q=n-1}1_{Y^*}^{\otimes p}\otimes P_i\otimes1_{Y^*}^{\otimes q}.
\end{eqnarray*}
On the other hand,
$$\Delta_{\ministuffle_q}^{(n-1)}(P_1\ldots P_m)
=\Delta_{\ministuffle_q}^{(n-1)}(P_1)\ldots\Delta_{\ministuffle_q}^{(n-1)}(P_m)$$
and
$$\scal{S_1{\stuffle_q}\ldots{\stuffle_q} S_n}{P_1\ldots P_m}
=\scal{S_1\otimes\ldots\otimes S_n}{\Delta_{\ministuffle_q}^{(n-1)}(P_1\ldots P_m)}.$$
Hence,
\begin{eqnarray*}
\scal{S_1{\stuffle_q}\ldots{\stuffle_q} S_n}{P_1\ldots P_m}
&=&\scal{\bigotimes_{i=1}^nS_i}{\prod_{i=1}^m\sum_{p+q=n-1}1_{Y^*}^{\otimes p}\otimes P_i\otimes1_{Y^*}^{\otimes q}}.
\end{eqnarray*}

\begin{enumerate}
\item  For $n>m$, by expanding $\Delta_{\ministuffle_q}^{(n-1)}(P_1)\ldots\Delta_{\ministuffle_q}^{(n-1)}(P_m)$,
one obtains a sum of tensors containing at least one factor equal to $1_{Y^*}$. 
For $i=1,..,n$, $S_i$ is proper and the result follows immediately.

\item  For $n=m$, since
\begin{eqnarray*}
\prod_{i=1}^n\Delta_{\ministuffle_q}^{(n-1)}(P_i)
&=&\sum_{\sigma\in\mathfrak{S}_n}\bigotimes_{i=1}^nP_{\sigma(i)}+Q,
\end{eqnarray*}
where $Q$ is sum of tensors containing at least one factor equal to $1$ and the $S_i$'s are proper  then
$\scal{S_1\otimes\ldots\otimes S_n}{Q}=0$.
Thus, the result follows.
\item  For $n<m$, since, for $i=1,..,n$, the power series $S_i$ is proper then the expected result follows by expanding the product
\begin{eqnarray*}
\prod_{i=1}^m\Delta_{\ministuffle_q}^{(n-1)}(P_i)&=&
\prod_{i=1}^m\sum_{p+q=n-1}1_{Y^*}^{\otimes p}\otimes P_i\otimes1_{Y^*}^{\otimes q}.
\end{eqnarray*}
\end{enumerate}
\end{proof}

\begin{definition}\label{pi1}
Let $\pi_1$ and  ${\check\pi_1}$ be the mutually adjoint projectors degree-preserving linear endomorphisms of $\kY$ given by, for any $w\in Y^+$,
\begin{eqnarray*}
\pi_1(w)&=&w+\sum_{k\ge2}\frac{(-1)^{k-1}}{k}
\sum_{u_1,\ldots,u_k\in Y^+}\langle w\bv u_1{\stuffle_q}\ldots{\stuffle_q} u_k\rangle u_1\ldots u_k,\\
{\check\pi_1}(w)&=&w+\sum_{k\ge2}\frac{(-1)^{k-1}}{k}
\sum_{u_1,\ldots,u_k\in Y^+}\langle w\bv u_1\ldots u_k\rangle u_1{\stuffle_q}\ldots{\stuffle_q} u_k.
\end{eqnarray*}
In particular, for any $y_k\in Y$, the polynomials $\pi_1(y_k)$ and ${\check\pi_1}(y_k)$ are given by
\begin{eqnarray*}
\pi_1(y_k)=y_k+\sum_{l\ge2}\frac{(-q)^{l-1}}{l}\sum_{j_1,\ldots,j_l\ge1\atop j_1+\ldots+j_l=k}y_{j_1}\ldots y_{j_l}
&\mbox{and}&
{\check\pi_1}(y_k)=y_k.
\end{eqnarray*}
\end{definition}

\begin{proposition}\label{logD}
Let $\calD_Y$ be the diagonal series over $Y$~:
\begin{eqnarray*}
\calD_Y&=&\sum_{w\in Y^*}w\otimes w.
\end{eqnarray*}
Then
\begin{enumerate}
\item  $\log\calD_Y=\Sum_{w\in Y^+}w\otimes\pi_1(w)=\Sum_{w\in Y^+}{\check\pi_1}(w)\otimes w.$

\item  For any $w\in Y^*$, we  have
\begin{eqnarray*}
w&=&\sum_{k\ge0}\frac1{k!}\sum_{u_1,\ldots,u_k\in Y^+}
\langle w\bv u_1{\stuffle_q}\ldots{\stuffle_q} u_k\rangle\pi_1(u_1)\ldots\pi_1(u_k)\\
&=&\sum_{k\ge0}\frac1{k!}\sum_{u_1,\ldots,u_k\in Y^+}
\langle w\bv u_1\ldots u_k\rangle{\check\pi_1}(u_1){\stuffle_q}\ldots{\stuffle_q}{\check\pi_1}(u_k).
\end{eqnarray*}
In particular, for any $y_s\in Y$, we have
\begin{eqnarray*}
y_s=\sum_{k\ge1}\frac{q^{k-1}}{k!}\sum_{s'_1+\cdots +s'_k=s}\pi_1(y_{s'_1})\ldots\pi_1(y_{s'_k})
&\mbox{and}&
y_s=\check\pi_1(y_s).
\end{eqnarray*}
\end{enumerate}
\end{proposition}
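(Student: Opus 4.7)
The plan is to identify $\log\calD_Y$ as the Mercator expansion of the proper series $\calD_Y-1_{Y^*}\otimes 1_{Y^*}$ in the completed tensor product $\kYY\,\widehat{\otimes}\,\kYY$ equipped with a well-chosen algebra structure, and then to invert via $\exp$ to obtain~(2). For~(1), I endow $\kYY\,\widehat{\otimes}\,\kYY$ with the product $(a\otimes b)(c\otimes d):=(a\stuffle_q c)\otimes bd$ ($\stuffle_q$ on the left factor, concatenation on the right). Since $\calD_Y-1_{Y^*}\otimes 1_{Y^*}=\sum_{w\in Y^+}w\otimes w$ is proper (vanishes in weight $0$), the Mercator series
\[
 \log\calD_Y\;=\;\sum_{k\geq 1}\frac{(-1)^{k-1}}{k}\bigl(\calD_Y-1_{Y^*}\otimes 1_{Y^*}\bigr)^k
\]
converges in the weight-completion, and a direct expansion yields
\[
 \bigl(\calD_Y-1_{Y^*}\otimes 1_{Y^*}\bigr)^k=\sum_{u_1,\ldots,u_k\in Y^+}(u_1\stuffle_q\cdots\stuffle_q u_k)\otimes u_1\cdots u_k .
\]
Unfolding $u_1\stuffle_q\cdots\stuffle_q u_k=\sum_v\scal{v}{u_1\stuffle_q\cdots\stuffle_q u_k}v$ and grouping the resulting double sum either on the first or the second factor recovers the two expressions of Definition~\ref{pi1}, establishing $\log\calD_Y=\sum_w\check\pi_1(w)\otimes w=\sum_w w\otimes\pi_1(w)$.

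For~(2), since $\log$ and $\exp$ are mutually inverse on proper series, $\calD_Y=\exp(\log\calD_Y)$. Substituting $\log\calD_Y=\sum_v v\otimes\pi_1(v)$ and computing powers in the same algebra structure,
\[
 (\log\calD_Y)^k=\sum_{u_1,\ldots,u_k\in Y^+}(u_1\stuffle_q\cdots\stuffle_q u_k)\otimes\pi_1(u_1)\cdots\pi_1(u_k) ,
\]
so equating coefficients of $w$ on the first factor in $\calD_Y=\sum_w w\otimes w$ produces the first identity of~(2); the second follows identically by substituting instead $\log\calD_Y=\sum_v\check\pi_1(v)\otimes v$ and extracting the coefficient of $w$ on the second factor. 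For the specialization $w=y_s$, every word in the support of $u_1\stuffle_q\cdots\stuffle_q u_k$ has length at least $\max_i|u_i|$; hence $\scal{y_s}{u_1\stuffle_q\cdots\stuffle_q u_k}=0$ unless each $u_i=y_{s'_i}$ is a single letter, in which case total $q$-merging contributes $q^{k-1}$ whenever $s'_1+\cdots+s'_k=s$. The identity $\check\pi_1(y_s)=y_s$ follows because $\scal{y_s}{u_1\cdots u_k}=0$ whenever $k\geq 2$ and $u_i\in Y^+$, so only the leading term of the defining sum survives.

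The main obstacle will be the careful bookkeeping of which tensor factor carries $\stuffle_q$ and which carries concatenation, since the two natural algebra structures on $\kYY\,\widehat{\otimes}\,\kYY$ produce the two dual expressions in~(1); the underlying formal-series manipulations must be performed consistently inside the graded completion with respect to weight, which has to be fixed at the outset. Once this framework is pinned down, the argument reduces to a direct expansion, an application of the $\log$/$\exp$ Mercator series, and a reindexing of summations.
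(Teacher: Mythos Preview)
Your proposal is correct and follows essentially the same approach as the paper: expand $\log\calD_Y$ via the Mercator series in the completed tensor product with $\stuffle_q$ on the left and concatenation on the right, regroup the resulting sum on either tensor factor to recover the definitions of $\pi_1$ and $\check\pi_1$, and then invert via $\calD_Y=\exp(\log\calD_Y)$ to obtain the identities in~(2). You are in fact slightly more explicit than the paper about the algebra structure on the tensor product, the convergence in the weight-completion, and the reason only single-letter $u_i$ survive in the specialization $w=y_s$, but the underlying argument is identical.
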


\begin{proof}
\begin{enumerate}
\item  Expanding by different ways the logarithm, it follows the results~:
\begin{eqnarray*}
\log\calD_Y
&=&\sum_{k\ge1}\frac{(-1)^{k-1}}{k}\biggl(\sum_{w\in Y^+}w\otimes w\biggr)^k\cr
&=&\sum_{k\ge1}\frac{(-1)^{k-1}}{k}\sum_{u_1,\ldots,u_k\in Y^+}
(u_1{\stuffle_q}\ldots{\stuffle_q} u_k)\otimes u_1\ldots u_k\cr
&=&\sum_{w\in Y^+}w\otimes\sum_{k\ge1}\frac{(-1)^{k-1}}{k}
\sum_{u_1,\ldots,u_k\in Y^+}\langle w\bv u_1{\stuffle_q}\ldots{\stuffle_q} u_k\rangle u_1\ldots u_k.\cr
\log\calD_Y
&=&\sum_{w\in Y^+}\sum_{k\ge1}\frac{(-1)^{k-1}}{k}
\sum_{u_1,\ldots,u_k\in Y^+}\langle w\bv u_1\ldots u_k\rangle u_1{\stuffle_q}\ldots{\stuffle_q} u_k\otimes w.
\end{eqnarray*}

\item  Since $\calD_Y=\exp(\log(\calD_Y))$ then, by the previous results, one  has separately,
\begin{eqnarray*}
\calD_Y
&=&\sum_{k\ge0}\frac1{k!}\biggl(\sum_{w\in Y^+}w\otimes\pi_1(w)\biggr)^k\\
&=&\sum_{k\ge0}\frac1{k!}\sum_{u_1,\ldots,u_k\in Y^+}
(u_1{\stuffle_q}\ldots{\stuffle_q} u_k)\otimes(\pi_1(u_1)\ldots\pi_1(u_k))\\
&=&\sum_{w\in Y^+}w\otimes\sum_{k\ge0}\frac1{k!}\sum_{u_1,\ldots,u_k\in Y^+}
\langle w\bv u_1{\stuffle_q}\ldots{\stuffle_q} u_k\rangle \pi_1(u_1)\ldots\pi_1(u_k).\\
\calD_Y
&=&\sum_{k\ge0}\frac1{k!}\sum_{u_1,\ldots,u_k\in Y^+}
({\check\pi_1}(u_1){\stuffle_q}\ldots{\stuffle_q}{\check\pi_1}(u_k))\otimes(u_1\ldots u_k)\\
&=&\sum_{w\in Y^+}\sum_{k\ge0}\frac1{k!}\sum_{u_1,\ldots,u_k\in Y^+}
\langle w\bv u_1\ldots u_k\rangle{\check\pi_1}(u_1){\stuffle_q}\ldots{\stuffle_q}{\check\pi_1}(u_k)\otimes w.
\end{eqnarray*}
\end{enumerate}
It follows then the expected result.
\end{proof}

\begin{lemma}\label{L2}
For any $w\in Y^+$, one has $\Delta_{\ministuffle_q}\pi_1(w)=\pi_1(w)\otimes1_{Y^*}+1_{Y^*}\otimes\pi_1(w)$.
\end{lemma}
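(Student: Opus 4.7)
The strategy is to extract primitivity of $\pi_1(w)$ from Proposition \ref{logD}, namely the identity $\log\calD_Y=\sum_{w\in Y^+}w\otimes\pi_1(w)$, by combining it with a ``group-like'' version of the Friedrichs identity for the diagonal series. Concretely, I would work in the completion $\kYY\hat\otimes\kYY\hat\otimes\kYY$ equipped with the hybrid product ($\stuffle_q$ on the first slot and concatenation on the other two) and check first that, by the very duality between $\stuffle_q$ and $\Delta_{\ministuffle_q}$,
\[
(\mathrm{id}\otimes\Delta_{\ministuffle_q})(\calD_Y)\;=\;\sum_{u,v\in Y^*}(u\stuffle_q v)\otimes u\otimes v\;=\;\calD_Y^{(1,2)}\cdot\calD_Y^{(1,3)},
\]
where $\calD_Y^{(1,2)}=\sum_u u\otimes u\otimes 1_{Y^*}$ and $\calD_Y^{(1,3)}=\sum_v v\otimes 1_{Y^*}\otimes v$. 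This is essentially the same computation as in the proof of Proposition \ref{logD}, with one extra tensor slot to track the coproduct.

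Next, the two factors $\calD_Y^{(1,2)}$ and $\calD_Y^{(1,3)}$ commute in this algebra: the stuffle is commutative, so the first slot contributes $u\stuffle_q v=v\stuffle_q u$, while in the other slots each factor is paired with $1_{Y^*}$. Since $\mathrm{id}\otimes\Delta_{\ministuffle_q}$ is a morphism for the chosen hybrid products (this is exactly the Hopf-compatibility axiom stating that $\Delta_{\ministuffle_q}$ is a concatenation morphism on $\calH_{\stuffle_q}$), it commutes with the logarithm of a series of the form $1_{Y^*}+(\mbox{proper})$, yielding
\[
(\mathrm{id}\otimes\Delta_{\ministuffle_q})(\log\calD_Y)\;=\;\log\calD_Y^{(1,2)}+\log\calD_Y^{(1,3)}\;=\;(\log\calD_Y)^{(1,2)}+(\log\calD_Y)^{(1,3)}.
\]
Substituting $\log\calD_Y=\sum_{w\in Y^+}w\otimes\pi_1(w)$ from Proposition \ref{logD} on both sides and comparing coefficients of the linearly independent family $\{w\}_{w\in Y^+}$ in the first slot gives exactly
\[
\Delta_{\ministuffle_q}\pi_1(w)\;=\;\pi_1(w)\otimes 1_{Y^*}+1_{Y^*}\otimes\pi_1(w),
\]
which is the claim.

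The conceptual content is already captured by Proposition \ref{logD}, so the only real obstacle is the bookkeeping of the hybrid product structure on the triple tensor product and the verification that $\mathrm{id}\otimes\Delta_{\ministuffle_q}$ commutes with $\log$ on series of the form $1_{Y^*}+(\mbox{proper})$. A more computational alternative would be to apply Friedrichs' criterion (Lemma \ref{lemme}, first part) directly to $\pi_1(w)$ and reduce $\scal{\pi_1(w)}{a\stuffle_q b}=0$ for $a,b\in Y^+$ to a combinatorial identity on iterated stuffle interleavings, but this route seems strictly messier than going through the group-like factorization of $\calD_Y$.
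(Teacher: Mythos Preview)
Your argument is correct and is essentially the paper's own proof: both exploit that $\calD_Y$ is group-like for $\Delta_{\ministuffle_q}$ (equivalently, $(\mathrm{id}\otimes\Delta_{\ministuffle_q})\calD_Y=\calD_Y^{(1,2)}\calD_Y^{(1,3)}$), pass to the logarithm to obtain a primitive element, and then read off the per-word identity by linear independence of $\{w\}_{w\in Y^*}$ in the first slot. The only cosmetic difference is that the paper performs the bookkeeping via an alphabet-duplication isomorphism $\alpha:Y\to\bar Y$ (treating the $\bar Y$-factor as a commutative $\stuffle_q$-coefficient ring so that Lemmas~\ref{lemme} and~\ref{lemme2} apply verbatim), whereas you keep the first tensor slot explicit and work directly in the completed triple tensor product with the hybrid $(\stuffle_q,{\tt conc},{\tt conc})$ product.
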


\begin{proof}
Let $\alpha$ be the alphabet duplication isomorphism defined by, for any $\bar y\in\bar Y$, $\bar y=\alpha(y)$ 

Applying the tensor product of algebra isomorphisms $\alpha\otimes\mathrm{Id}$ to the diagonal series $\calD_Y$,
we obtain, by Lemma \ref{lemme}, a group-like element and then applying the logarithm of this element (or equivalently, applying $\alpha\otimes\pi_1$ to $\calD_Y$) we obtain $\calS$ which is, by Lemma \ref{lemme2}, a primitive element~:
\begin{eqnarray*}
(\alpha\otimes\mathrm{Id})\calD_Y=\sum_{w\in Y^*}\alpha(w)\;w
&\mbox{and}&
\calS=(\alpha\otimes\pi_1)\calD_Y=\sum_{w\in Y^*}\alpha(w)\;\pi_1(w).
\end{eqnarray*}
The two members of the identity $\Delta_{\ministuffle_q}\calS=\calS\otimes1_{Y^*}+1_{Y^*}\otimes\calS$ give respectively
\begin{eqnarray*}
\sum_{w\in Y^*}\alpha(w)\;\Delta_{\ministuffle_q}\pi_1(w)
&\mbox{and}&
\sum_{w\in Y^*}\alpha(w)\;\pi_1(w)\otimes1_{Y^*}+\sum_{w\in Y^*}\alpha(w)\;1_{Y^*}\otimes\pi_1(w).
\end{eqnarray*}
Since  $\{ w\}_{w\in\bar Y^*}$ is a basis for $\Q\pol{\bar Y}$ then identifying the coefficients in the previous expressions, we get $\Delta_{\ministuffle_q}\pi_1(w)=\pi_1(w)\otimes1_{Y^*}+1_{Y^*}\otimes\pi_1(w)$ meaning that $\pi_1(w)$ is primitive.
\end{proof}

\subsection{Pair of bases in duality on $q$-deformed stuffle algebra}
Let $\calP=\{P\in\QY\bv\Delta_{\ministuffle_q}P=P\otimes1_{Y^*}+1_{Y^*}\otimes P\}$ be the set of primitive polynomials \cite{B_Lie_II_III}.
Since, in virtue of Lemma \ref{L2}., $\mathrm{Im}(\pi_1)\subseteq \calP$, we can state the following

\begin{definition}\label{Pi}
Let $\{\Pi_l\}_{l\in \Lyn Y}$ be the family of $\calP$ and\footnote{Due to the fact this Hopf algebra is cocommutative and graded, then by the theorem of CQMM, $\kY\simeq \calU(\calP)$.} $\kY$ obtained as follows
$$\begin{array}{cccll}
\Pi_{y_k}&=&\pi_1(y_k)&\mbox{for}&k\ge1,\\
\Pi_{l}&=&[\Pi_s,\Pi_r]&\mbox{for}&l\in\Lyn X,\mbox{ standard factorization of }l=(s,r),\\
\Pi_{w}&=&\Pi_{l_1}^{i_1}\ldots \Pi_{l_k}^{i_k}
&\mbox{for}&w=l_1^{i_1}\ldots l_k^{i_k},l_1>\ldots>l_k,l_1\ldots,l_k\in\Lyn Y.
\end{array}$$
\end{definition}

\begin{proposition}\label{Ptriangulaire}
\begin{enumerate}
\item  For $l\in\Lyn Y$, the polynomial $\Pi_l$ is upper triangular and homogeneous in weight~:
\begin{eqnarray*}
\Pi_l&=&l+\sum_{v>l,(v)=(l)}c_vv,
\end{eqnarray*}
where for any $w\in Y^+$, $(w)$ denotes the weight of $w$ with $(y_k)=\deg(y_k)=k$.
\item The family $\{\Pi_w\}_{w\in Y^*}$ is upper triangular and homogeneous in weight~:
\begin{eqnarray*}
\Pi_w&=&w+\sum_{v>w,(v)=(w)}c_vv.
\end{eqnarray*}
\end{enumerate}
\end{proposition}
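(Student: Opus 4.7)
The plan is to prove both parts by induction on weight, exploiting the fact that $\pi_1$ is weight-homogeneous. Since each $\pi_1(y_k)$ lies in the weight-$k$ component of $\kY$, every $\Pi_w$ built as in Definition \ref{Pi} is automatically weight-homogeneous of weight $(w)$; this already settles the weight-homogeneity assertion in both parts. What remains is lex triangularity.

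For the base case $l = y_k$, the explicit formula in Definition \ref{pi1} writes $\pi_1(y_k)$ as $y_k$ plus a sum over words $y_{j_1}\ldots y_{j_\ell}$ with $\ell \geq 2$ and $j_1+\cdots+j_\ell = k$. Since $j_1 \leq k-1$ forces $y_{j_1} > y_k$ in the alphabet order $y_1>y_2>\cdots$, each such word is lex-strictly greater than $y_k$, which establishes the claim at length one.

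For the inductive step on a Lyndon word $l$ of length at least $2$ with standard factorization $l = sr$, I would expand $\Pi_l = \Pi_s\Pi_r - \Pi_r\Pi_s$ using the induction hypothesis that $\Pi_s$ and $\Pi_r$ have respective leading monomials $s$ and $r$. A generic summand of $\Pi_s\Pi_r$ is a concatenation $vu$ with $v \geq s$, $u \geq r$, $(v)=(s)$, and $(u)=(r)$. The crucial observation is that weight-homogeneity forbids two distinct words of equal weight from being prefixes of one another (a proper prefix of equal weight would have to be empty). Hence the lex comparison of $vu$ with $sr$ is controlled strictly by comparing $v$ with $s$ and then $u$ with $r$, yielding $vu \geq sr$ with equality iff $(v,u)=(s,r)$. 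Symmetrically every summand of $\Pi_r\Pi_s$ is lex-$\geq rs$, and since $l = sr$ is Lyndon, the nontrivial cyclic rotation $rs$ satisfies $rs > sr$. Consequently $l$ appears in $\Pi_l$ with coefficient $1$ and every other monomial of $\Pi_l$ is lex-strictly greater and of weight $(l)$. For part (2), the same weight-homogeneity argument applied to the product $\Pi_{l_1}^{i_1}\ldots\Pi_{l_k}^{i_k}$ coming from the Chen--Fox--Lyndon factorization of $w$ yields leading monomial $l_1^{i_1}\ldots l_k^{i_k} = w$ with coefficient $1$, the other monomials being lex-strictly greater at weight $(w)$.

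The only delicate point is the lex comparison of $vu$ versus $sr$ in the inductive step: without weight-homogeneity, a word $v > s$ with $s$ a proper prefix of $v$ could in principle reverse the inequality between $vr$ and $sr$. Weight-homogeneity rules this out and is therefore the key ingredient, together with the standard fact that a Lyndon word is strictly smaller than each of its nontrivial cyclic rotations.
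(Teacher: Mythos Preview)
Your proof is correct and follows essentially the same inductive strategy as the paper's own (induction via the standard factorization, using $\Pi_l=[\Pi_s,\Pi_r]$ together with $rs>sr$ for a Lyndon word $l=sr$). You are in fact more explicit than the paper on the one subtle point---that two distinct words of equal weight can never be proper prefixes of one another, so the lex comparison of a concatenation $vu$ against $sr$ is decided factor by factor---which the paper's proof compresses into the phrase ``getting $e_w$'s from $c_v$'s and $d_u$'s.''
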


\begin{proof}
\begin{enumerate}
\item  Let us prove it by induction on the length of $l$~:
the result is immediate for $l\in Y$.
The result is suppose verified for any $l\in\Lyn Y\cap Y^k$ and $0\le k\le N$.
At $N+1$, by the standard factorization $(l_1,l_2)$ of $l$,
one has $\Pi_{l}=[\Pi_{l_1},\Pi_{l_2}]$ and $l_2l_1>l_1l_2=l$.
By induction hypothesis,
\begin{eqnarray*}
\Pi_{ l_1}=l_1+\sum_{v> l_1,(v)=(l_1)}c_vv
&\mbox{and}&
\Pi_{ l_2}=l_2+\sum_{u> l_2,(u)=(l_2)}d_uu,\\
&\Rightarrow&\Pi_{l}=l+\sum_{w> l,(w)=(l)}e_ww,
\end{eqnarray*}
getting $e_w$'s from $c_v$'s and $d_u$'s.


\item  Let $w=l_1\ldots l_k$, with $l_1\ge\ldots\ge l_k$ and $l_1,\ldots,l_k\in\Lyn Y$.
One has
\begin{eqnarray*}
\Pi_{l_i}=l_i+\sum_{v>l_i,(v)=(l_i)}c_{i,v}v
&\mbox{and}&
\Pi_w=l_1\ldots l_k+\sum_{u>w,(u)=(w)}d_uu,
\end{eqnarray*}
where the $d_u$'s are obtained from the $c_{i,v}$'s. Hence, the family $\{\Pi_w\}_{w\in Y^*}$  is upper triangular and  homogeneous in weight. As the grading by weight is in finite dimensions, this family is a basis of $\kY$. 
\end{enumerate}
\end{proof}

\begin{definition}\label{Sigma}
Let $\{\Sigma_w\}_{w\in Y^*}$ be the family of the quasi-shuffle algebra (viewed as a $\Q$-module) obtained by duality with $\{\Pi_w\}_{w\in Y^*}$~:
\begin{eqnarray*}
\forall u,v\in Y^*,\quad\scal{\Sigma_{v}}{\Pi_u}&=&\delta_{u,v}.
\end{eqnarray*}
\end{definition}
\begin{proposition}\label{Striangulaire}
The family $\{\Sigma_w\}_{w\in Y^*}$ is lower triangular and homogeneous in weight. In other words,
\begin{eqnarray*}
\Sigma_w&=& w+\sum_{v<w,(v)=(w)}d_vv.
\end{eqnarray*}
\end{proposition}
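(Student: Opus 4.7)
The plan is to reduce the statement to a linear-algebra computation: invert the transition matrix from the monomial basis to the $\{\Pi_w\}$ basis, respecting both the total order on $Y^*$ and the weight grading.

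First, I would encode Proposition \ref{Ptriangulaire} as a matrix statement. For each weight $n$, the set $Y^*_n = \{w \in Y^*\mid (w)=n\}$ is finite, and the family $\{\Pi_w\}_{w\in Y^*_n}$ is a basis of the weight-$n$ component of $\kY$ (Proposition \ref{Ptriangulaire} gives this automatically once we know $\{\Pi_w\}$ is homogeneous). Write $\Pi_w=\sum_{u\in Y^*_n} M_{w,u}\, u$. By Proposition \ref{Ptriangulaire}, $M_{w,w}=1$ and $M_{w,u}=0$ whenever $u<w$ (within the same weight). So, ordering the rows and columns of $M$ by the total order on $Y^*_n$, the matrix $M$ is upper unitriangular.

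Next, I would translate Definition \ref{Sigma} into the same matrix language. Write $\Sigma_v = \sum_{w\in Y^*} N_{v,w}\, w$. Since $\Pi_u$ is weight-homogeneous of weight $(u)$, and the duality pairing $\scal{\cdot}{\cdot}$ restricts to the identity on each $Y^*_n$ with $Y^*_m$ orthogonal to $Y^*_n$ when $m\neq n$, the relation $\scal{\Sigma_v}{\Pi_u}=\delta_{u,v}$ forces $\Sigma_v$ to be concentrated in weight $(v)$. Thus $\Sigma_v$ is weight-homogeneous, and within the weight-$(v)$ block the equation becomes
\begin{eqnarray*}
\sum_{w\in Y^*_n} N_{v,w}\, M_{u,w} \;=\; \delta_{u,v},
\end{eqnarray*}
i.e.\ $N\, M^{\top} = I$, so $N = (M^{\top})^{-1} = (M^{-1})^{\top}$.

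Finally, since $M$ is upper unitriangular, $M^{-1}$ is upper unitriangular, hence $N=(M^{-1})^{\top}$ is lower unitriangular: $N_{v,v}=1$ and $N_{v,w}=0$ unless $w\le v$ in our order on $Y^*_{(v)}$. Reading this back in terms of polynomials yields $\Sigma_w = w+\sum_{v<w,\,(v)=(w)} d_v\, v$, which is exactly the claim. There is no real obstacle here: the proposition is essentially the formal transposition of Proposition \ref{Ptriangulaire}, and the only point that deserves attention is noting that weight-homogeneity of the $\Pi_u$'s (combined with the grading of the pairing) already forces $\Sigma_v$ to live in a single weight component before we do any triangularity argument.
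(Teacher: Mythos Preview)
Your proposal is correct and is essentially the same argument as the paper's, which simply says ``By duality with $\{\Pi_w\}_{w\in Y^*}$ (see Proposition \ref{Ptriangulaire}), we get the expected result.'' You have merely unpacked this one-line duality remark into explicit matrix language, observing that the inverse-transpose of an upper unitriangular matrix is lower unitriangular.
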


\begin{proof}
By duality with $\{\Pi_w\}_{w\in Y^*}$ (see Proposition \ref{Pi}), we get the expected result.
\end{proof}

\begin{theorem}
\begin{enumerate}
\item The family $\{\Pi_l\}_{l\in\Lyn Y}$ forms a basis of $\calP$.
\item The family $\{\Pi_w\}_{w\in Y^*}$ forms a basis of $\kY$.
\item The family $\{\Sigma_w\}_{w\in Y^*}$ generate freely the quasi-shuffle algebra.
\item  The family $\{\Sigma_l\}_{l\in\Lyn Y}$ forms a transcendence basis of $(\kY,{\stuffle_q})$.
\end{enumerate}
\end{theorem}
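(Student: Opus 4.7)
Part (2) is essentially free: Proposition \ref{Ptriangulaire} shows that, on each homogeneous component of $\kY$ (which is finite-dimensional by the grading), the family $\{\Pi_w\}_{w\in Y^*}$ is unitriangular with respect to the basis of words. Hence it is a basis of $\kY$.

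For part (1), I would first observe that each $\Pi_l$ is primitive. Indeed $\Pi_{y_k}=\pi_1(y_k)$ is primitive by Lemma \ref{L2}, and the bracket of two primitive elements in a bialgebra is again primitive (a standard consequence of the compatibility of $\Delta_{\ministuffle_q}$ with the product on $\kY\otimes\kY$), so an induction on the length of the Lyndon word $l$ using the standard factorization gives $\Pi_l\in\calP$. Since $\bf k$ is a $\Q$-algebra and $\calH_{\ministuffle_q}=(\kY,\mathtt{conc},\Delta_{\ministuffle_q})$ is a graded, connected, cocommutative Hopf algebra, the Cartier–Quillen–Milnor–Moore theorem yields $\kY\simeq\calU(\calP)$. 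Combining this with Poincaré–Birkhoff–Witt and part (2): the decomposition $\Pi_w=\Pi_{l_1}^{i_1}\cdots\Pi_{l_k}^{i_k}$ indexed by $w=l_1^{i_1}\cdots l_k^{i_k}$ is exactly the PBW form associated to the family $\{\Pi_l\}_{l\in\Lyn Y}$; since the whole collection $\{\Pi_w\}_{w\in Y^*}$ is a basis of $\kY=\calU(\calP)$, the Lyndon sub-collection $\{\Pi_l\}_{l\in\Lyn Y}$ must be a basis of $\calP$.

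For part (3), the key is the factorization formula
\begin{equation*}
\Sigma_{l_1^{i_1}\cdots l_k^{i_k}}\;=\;\frac{1}{i_1!\cdots i_k!}\,\Sigma_{l_1}^{\stuffle_q i_1}\stuffle_q\cdots\stuffle_q\Sigma_{l_k}^{\stuffle_q i_k},
\end{equation*}
for $l_1>\cdots>l_k$ Lyndon. I would prove this by computing, for any $u=l_{j_1}^{m_1}\cdots l_{j_r}^{m_r}\in Y^*$ written in decreasing Lyndon factorization, the pairing of the right-hand side against $\Pi_u=\Pi_{l_{j_1}}^{m_1}\cdots\Pi_{l_{j_r}}^{m_r}$. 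Each $\Sigma_{l_j}$ is proper (its lowest-degree term is $l_j$), and each $\Pi_{l_j}$ is primitive by part (1), so Lemma \ref{L3} applies: part (1) of that lemma kills all pairings with the wrong total number of factors, while part (2) reduces the matching pairing to a sum over $\mathfrak{S}_n$ that, together with the duality $\scal{\Sigma_{l_j}}{\Pi_{l_{j'}}}=\delta_{l_j,l_{j'}}$, produces exactly the required Kronecker $\delta_{u,l_1^{i_1}\cdots l_k^{i_k}}$ after dividing by $i_1!\cdots i_k!$. Since the basis $\{\Sigma_w\}_{w\in Y^*}$ is thus realized as the normalised $\stuffle_q$-products of the $\Sigma_l$'s indexed by non-increasing Lyndon sequences, and this family is linearly independent (its triangular pairing against the PBW basis $\{\Pi_w\}$ is non-degenerate), the $\{\Sigma_l\}_{l\in\Lyn Y}$ are algebraically independent and generate the whole $q$-stuffle algebra. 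This is exactly free generation.

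Part (4) is then an immediate rephrasing of (3): a free family of generators of the commutative algebra $(\kY,\stuffle_q)$ is by definition a transcendence basis. The main technical obstacle is the factorization formula in step (3); everything else is either bookkeeping (triangularity, PBW) or a direct appeal to CQMM and Lemma \ref{L3}.
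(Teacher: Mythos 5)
Your plan is correct and rests on the same structural ingredients as the paper (triangularity, PBW, CQMM), but it is organized differently and is in places more complete than the paper's own proof, which is only a four-line sketch. For (1) the paper merely asserts that the $\Pi_l$ are primitive, triangular, hence free, and declares the result to follow; freeness alone does not give that they span $\calP$, and your detour through CQMM ($\kY\simeq\calU(\calP)$), PBW and part (2) supplies exactly the missing justification. For (3) the paper simply invokes CQMM, whereas you derive the explicit factorization $\Sigma_{l_1^{i_1}\cdots l_k^{i_k}}=\frac{1}{i_1!\cdots i_k!}\,\Sigma_{l_1}^{{\ministuffle_q}\, i_1}{\stuffle_q}\cdots{\stuffle_q}\Sigma_{l_k}^{{\ministuffle_q}\, i_k}$; this is precisely Proposition~\ref{checkSigma}(2), which the paper proves separately \emph{after} the theorem, so your route front-loads that computation and makes the free generation concrete rather than abstract. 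One imprecision to fix: in evaluating the pairing against $\Pi_{v_1}\cdots\Pi_{v_m}$ you say that Lemma~\ref{L3}(1) ``kills all pairings with the wrong total number of factors,'' but part (1) of that lemma only covers $n>m$; for $n<m$ you need Lemma~\ref{L3}(3) together with the observation that a product $\Pi_{v_{j_1}}\cdots\Pi_{v_{j_r}}$ with $r\ge2$ and $v_{j_1}\ge\cdots\ge v_{j_r}$ equals $\Pi_\nu$ for a non-Lyndon word $\nu$, hence pairs to zero against any $\Sigma_{u_i}$ with $u_i$ Lyndon (this is case (c) in the proof of Proposition~\ref{checkSigma}). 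With that case supplied, your argument is complete.
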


\begin{proof}
The family $\{\Pi_l\}_{l\in\Lyn Y}$ of primitive upper triangular  homogeneous in weight polynomials is free and
the first result follows. The second is a direct consequence of the Poincar\'e-Birkhoff-Witt theorem.
By the Cartier-Quillen-Milnor-Moore theorem, we get the third one and the last one is obtained
as consequence of the constructions of $\{\Sigma_l\}_{l\in\Lyn Y}$ and $\{\Sigma_w\}_{w\in Y^*}$.
\end{proof}

To decompose any letter $y_s\in Y$ in the basis $\{\Pi_w\}_{w\in Y^*}$, one can use its expression in Proposition \ref{logD}.

Now, using the mutually adjoint projectors  $\pi_1$ and ${\check\pi_1}$ given in Definition \ref{pi1} and are determinded by Proposition \ref{logD}, let us clarify the basis $\{\Sigma_w\}_{w\in Y^*}$ and then the transcendence basis $\{\Sigma_l\}_{l\in\Lyn Y}$ of the quasi-shuffle algebra $(\kY,{\stuffle_q},1_{Y^*})$ as follows

\begin{proposition}\label{checkSigma}
We have
\begin{enumerate}
\item  For $w=1_{Y^*}$, $\Sigma_{w}=1$.
\item  For any $w=l_1^{i_1}\ldots l_k^{i_k}$, with $l_1,\ldots,l_k\in\Lyn Y$ and $l_1>\ldots>l_k$,
\begin{eqnarray*}
\Sigma_w&=&\Frac{\Sigma_{l_1}^{{\stuffle_q} i_1}{\stuffle_q}\ldots{\stuffle_q}\Sigma_{l_k}^{{\stuffle_q} i_k}}{i_1!\ldots i_k!}.
\end{eqnarray*}
\item  For any $y\in Y$,
\begin{eqnarray*}
\Sigma_{y}=y={\check\pi_1}(y).
\end{eqnarray*}
\end{enumerate}
\end{proposition}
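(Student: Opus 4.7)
My plan is to verify each of the three claims by duality: showing that the proposed right-hand side $R_w$ satisfies $\langle R_w \mid \Pi_u \rangle = \delta_{u,w}$ for every $u \in Y^*$ uniquely characterises $\Sigma_w$ by Definition \ref{Sigma}.

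Part (1) is immediate, since Proposition \ref{Ptriangulaire} shows that $\Pi_u$ is homogeneous of weight $\geq 1$ for $u \in Y^+$ and therefore has no constant term, giving $\langle 1 \mid \Pi_u \rangle = \delta_{u, 1_{Y^*}}$.

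For part (2), I fix $u = m_1^{j_1} \cdots m_r^{j_r}$ with $m_1 > \cdots > m_r \in \Lyn Y$ and set $N = i_1 + \cdots + i_k$, $n = j_1 + \cdots + j_r$, and then apply Lemma \ref{L3} with the $N$ proper series being the $\Sigma_{l_a}$'s (properness is the lower-triangular form of Proposition \ref{Striangulaire}) and the $n$ primitive polynomials being the $\Pi_{m_b}$'s (primitivity for letters is Lemma \ref{L2}, and it extends to all Lyndon words by induction via Lie brackets, since brackets of primitive elements remain primitive). The three cases of the lemma play out as follows. For $N > n$, the pairing is zero. For $N = n$, the formula reduces, using the Lyndon-level duality $\langle \Sigma_l \mid \Pi_m \rangle = \delta_{l,m}$, to counting permutations $\sigma \in \mathfrak{S}_N$ that match the two lists of Lyndon labels; such $\sigma$ exist only when the multisets $\{(l_a,i_a)\}$ and $\{(m_b,j_b)\}$ coincide, i.e.\ $u = w$, and then the number of matching $\sigma$ is exactly $\prod_a i_a!$, which cancels the denominator and returns $1$. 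For $N < n$, I observe that because the primitive factors of $\Pi_u$ are arranged in blocks of decreasing Lyndon labels, any strictly increasing sub-selection of indices lists them in non-increasing Lyndon order; consequently each $\mu(w_b)$ is itself a PBW basis element $\Pi_{v_b}$, and pairing it against the Lyndon element $\Sigma_{l_a}$ forces $v_b = l_a$, i.e.\ $|w_b| = 1$, so $\sum_b |w_b| = N < n$ contradicts the support condition $\sum_b |w_b| = n$ and the pairing vanishes.

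Part (3) then follows by a direct coefficient computation. By Proposition \ref{Ptriangulaire}, $\Pi_u = u + \sum_{v > u,\,(v)=(u)} c_v v$ is homogeneous of weight $(u)$. Under the lex order induced by $y_1 > y_2 > \cdots$, every weight-$k$ word distinct from the letter $y_k$ must begin with some $y_i$ with $i < k$ and is therefore lex-greater than $y_k$; hence $y_k$ is the minimum weight-$k$ word and appears in $\Pi_u$ only when $u = y_k$, giving $\langle y_k \mid \Pi_u \rangle = \delta_{u, y_k}$. Combined with $\check\pi_1(y_k) = y_k$ from Definition \ref{pi1}, this yields $\Sigma_{y_k} = y_k = \check\pi_1(y_k)$. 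I expect the main obstacle to be the $N < n$ case in part (2): one has to make precise the identification $\mu(w_b) = \Pi_{v_b}$ coming from the increasing-index structure of $\mathcal{M}$, and then invoke Lyndon-level duality to pin down $|w_b| = 1$ in each surviving term.
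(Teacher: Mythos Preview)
Your proof is correct and follows essentially the same route as the paper: both arguments reduce part~(2) to the three cases of Lemma~\ref{L3}, using properness of the $\Sigma_{l_a}$ and primitivity of the $\Pi_{m_b}$, and handle the $N<n$ case by observing that an increasing sub-selection of the $\Pi$-factors yields a PBW element $\Pi_{\nu}$ with $\nu\notin\Lyn Y$ whenever $|w_b|\ge 2$, killing the pairing with $\Sigma_{l_a}$. For part~(3) the paper invokes Proposition~\ref{Striangulaire} directly (since $y_k$ is the minimal weight-$k$ word, the lower-triangular sum is empty), which is the dual formulation of your argument via Proposition~\ref{Ptriangulaire}.
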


\begin{proof}
\begin{enumerate}
\item  Since $\Pi_{1_{Y^*}}=1$ then $\Sigma_{1_{Y^*}}=1$.

\item  Let $u=u_1\ldots u_n=l_1^{i_1}\ldots l_k^{i_k},v=v_1\ldots v_m=h_1^{j_1}\ldots h_p^{j_p}$ with
$l_1\ldots,l_k,\allowbreak h_1,\ldots,h_p,\allowbreak u_1,\ldots,u_n$ and $v_1,\ldots,v_m\in\Lyn Y,\allowbreak l_1>\ldots>l_k,\allowbreak h_1>\ldots>h_p,\allowbreak u_1\ge\ldots\ge u_n$ and $v_1\ge\ldots\ge v_m$ 
and  $i_1+\ldots+i_k=n,\allowbreak j_1+\ldots+j_p=m$. Hence, if $m\ge2$ (resp. $n\ge2$)
then $v\notin\Lyn Y$ (resp. $u\notin\Lyn Y$).

Since
\begin{eqnarray*}
\scal{\Sigma_{u_1}{\stuffle_q}\ldots{\stuffle_q}\Sigma_{u_n}}{\prod_{i=1}^n\Pi_{v_i}}
&=&\scal{\Sigma_{u_1}\otimes\ldots\otimes\Sigma_{u_n}}{\Delta_{\ministuffle_q}^{(n-1)}(\Pi_{v_1}\ldots\Pi_{v_m})}
\end{eqnarray*}
then many cases occur~:
\begin{enumerate}
\item Case $n>m$. By Lemma \ref{L3}(\ref{1}), one has
\begin{eqnarray*}
\scal{\Sigma_{u_1}{\stuffle_q}\ldots{\stuffle_q}\Sigma_{u_n}}{\Pi_{v_1}\ldots\Pi_{v_m}}=0.
\end{eqnarray*}

\item Case $n=m$. By Lemma \ref{L3}(\ref{2}), one has
\begin{eqnarray*}
\scal{\Sigma_{u_1}{\stuffle_q}\ldots{\stuffle_q}\Sigma_{u_n}}{\prod_{i=1}^n\Pi_{v_i}}
&=&\sum_{\sigma\in\Sigma_n}\prod_{i=1}^n\scal{\Sigma_{u_i}}{\Pi_{v_{\sigma(i)}}}\\
&=&\sum_{\sigma\in\Sigma_n}\prod_{i=1}^n\delta_{u_i,v_{\sigma(i)}}.
\end{eqnarray*}
Thus, if $u\neq v$ then $(u_1,\ldots,u_n)\neq(v_1,\ldots,v_n)$ then the second member is vanishing else,
{\it i.e.} $u=v$, the second member equals $1$ because the factorization by Lyndon words is unique.

\item Case $n<m$. By Lemma \ref{L3}(\ref{3}), let us consider the following language over the new alphabet
$\calA:=\{a_1,\ldots,a_m\}$~:
\begin{eqnarray*}
\mathcal{M}&=&\{w\in\calA^*|w=a_{j_1}\ldots a_{j_{|w|}},j_1<\ldots<j_{|w|},|w|\ge1\},
\end{eqnarray*}
and the morphism $\mu:\Q\langle\calA\rangle\longrightarrow\kY$ given by, for any $i=1,\ldots,m,\mu(a_i)=\Pi_{v_i}$. We get~:
\begin{eqnarray*}
\scal{\Sigma_{u_1}{\stuffle_q}\ldots{\stuffle_q}\Sigma_{u_n}}{\prod_{i=1}^n\Pi_{v_i}}
&=&\sum_{w_1,\ldots,w_n\in\mathcal{M}\atop\supp(w_1{\shuffle}\ldots{\shuffle} w_n)\ni a_1\ldots a_m}
\prod_{i=1}^n\scal{\Sigma_{u_i}}{\mu(w_i)}\\
&=&0.
\end{eqnarray*}
Because in the right side of the first equality, on the one hand, there is at least one $w_i,|w_i|\ge2$, corresponding to
$\mu(w_i)=\Pi_{v_{j_1}}\ldots\Pi_{v_{j_{|w_i|}}}$ such that
$v_{j_1}\ge\ldots\ge v_{j_{|w_i|}}$ and on the other hand, 
$\nu_i:=v_{j_1}\ldots v_{j_{|w_i|}}\notin\Lyn Y$ and $u_i\in\Lyn Y$.
\end{enumerate}
By consequent,
\begin{eqnarray*}
\scal{\Sigma_{u}}{\Pi_v}
&=&\frac{1}{i_1!\ldots i_k!}
\scal{\Sigma_{l_1}^{{\stuffle_q} i_1}{\stuffle_q}\ldots{\stuffle_q}\Sigma_{l_k}^{{\stuffle_q} i_k}}{\Pi_{h_1}^{j_1}\ldots\Pi_{h_p}^{j_p}}\\
&=&\delta_{u,v}.
\end{eqnarray*}
\item For any $y\in Y$, by Proposition \ref{Striangulaire}, $\Sigma_{y}=y={\check\pi_1}(y)$. The directe computation prove that, for any $w\in Y^*$ and for any $y\in Y$, one has $\scal{\Pi_w}{\Sigma_y}=\delta_{w,y}$.
\end{enumerate}
\end{proof}

\begin{proposition}\label{D_Y}
\begin{enumerate}
\item For $w\in Y^+$, the polynomial $\Sigma_w$ is proper and homogeneous of degree $(w)$, for $\deg(y_i)=i$, and with rational positive coefficients.
\item  $\calD_Y=\Sum_{w\in Y^*}\Sigma_w\otimes\Pi_w
=\Prod_{l\in\Lyn Y}^{\searrow}\exp(\Sigma_l\otimes\Pi_l)$.

\item  The family $\Lyn Y$ forms a transcendence basis 
of the quasi-shuffle algebra and the family of proper polynomials of rational positive coefficients defined by,
for any $w=l_1^{i_1}\ldots l_k^{i_k}$ with $l_1>\ldots>l_k$ and $l_1,\ldots,l_k\in\Lyn Y$,
\begin{eqnarray*}
\chi_w&=&\frac{1}{i_1!\ldots i_k!}l_1^{{\stuffle_q} i_1}{\stuffle_q}\ldots{\stuffle_q} l_k^{{\stuffle_q} i_k}
\end{eqnarray*}
forms a basis of the quasi-shuffle algebra.
\item  Let $\{\xi_w\}_{w\in Y^*}$ be the basis of the envelopping algebra $\calU(\LQX)$ obtained by duality with $\{\chi_w\}_{w\in Y^*}$~:
\begin{eqnarray*}
\forall u,v\in Y^*,\quad\scal{\chi_{v}}{\xi_u}&=&\delta_{u,v}.
\end{eqnarray*}
Then the family $\{\xi_l\}_{l\in\Lyn Y}$ forms  a  basis of the free Lie algebra $\LQY$.
\end{enumerate}
\end{proposition}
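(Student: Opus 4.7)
The plan is to prove the four assertions in order, relying on the duality relation from Definition \ref{Sigma}, the triangular shape of the bases $\{\Pi_w\}$ and $\{\Sigma_w\}$ from Propositions \ref{Ptriangulaire} and \ref{Striangulaire}, and the factorized formula for $\Sigma_w$ given by Proposition \ref{checkSigma}.

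For assertion (1), properness of $\Sigma_w$ for $w\in Y^+$ is immediate from duality: since $\Pi_{1_{Y^*}}=1_{Y^*}$, one has $\langle\Sigma_w\bv 1_{Y^*}\rangle=\langle\Sigma_w\bv\Pi_{1_{Y^*}}\rangle=\delta_{w,1_{Y^*}}=0$. Homogeneity of $\Sigma_w$ in weight $(w)$ follows from the fact that the canonical pairing is graded by weight (every $y_k$ has weight $k$, and the pairing is non-degenerate on each graded component), so the basis dual to the weight-homogeneous basis $\{\Pi_w\}$ is itself weight-homogeneous of the same degree. Positivity of the rational coefficients will be shown by induction on length, starting from $\Sigma_y=y$ for letters $y\in Y$ (Proposition \ref{checkSigma}(3)), and propagating through the formula $\Sigma_w=\Sigma_{l_1}^{\stuffle_q i_1}\stuffle_q\cdots\stuffle_q\Sigma_{l_k}^{\stuffle_q i_k}/(i_1!\cdots i_k!)$; positivity of the $\stuffle_q$-structure constants (treating $q$ as a positive parameter) together with a descending recursion on the Lyndon order for the generators $\Sigma_l$ yields the claim.

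For assertion (2), the first equality $\calD_Y=\sum_{w\in Y^*}\Sigma_w\otimes\Pi_w$ is the reproducing property of dual bases: expand the right factor of $\sum_w w\otimes w$ in the basis $\{\Pi_u\}$ via $w=\sum_u\langle\Sigma_u\bv w\rangle\Pi_u$ and collect by $u$. For the factorized form, expand each exponential and exploit that the product on the left tensor factor is $\stuffle_q$ while on the right is concatenation, so $(\Sigma_l\otimes\Pi_l)^{i_l}=\Sigma_l^{\stuffle_q i_l}\otimes\Pi_l^{i_l}$. Grouping terms indexed by multi-exponents $(i_l)_{l\in\Lyn Y}$ in decreasing Lyndon order, the right factors assemble into $\Pi_w$ by Definition \ref{Pi}, and the left factors $\Sigma_l^{\stuffle_q i_l}/i_l!$ assemble into $\Sigma_w$ by Proposition \ref{checkSigma}(2), matching the diagonal expression.

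Assertion (3) follows by triangularity: Proposition \ref{Striangulaire} gives a lower unitriangular transition matrix from $\{\Sigma_l\}_{l\in\Lyn Y}$ to $\{l\}_{l\in\Lyn Y}$ at each fixed weight; since $\{\Sigma_l\}$ is a transcendence basis by the preceding theorem, so is $\Lyn Y$. The family $\{\chi_w\}$ is then a basis of $(\kY,\stuffle_q)$ since it is obtained from the basis $\{\Sigma_w\}$ by the same triangular substitution $\Sigma_l\leftrightarrow l$ in the stuffle products. For (4), the Cartier–Quillen–Milnor–Moore theorem identifies $\calH_{\stuffle_q}$ with $\calU(\calP)$; duality with the basis $\{\chi_w\}$ of $\calH_{\stuffle_q}^\vee$, combined with the symmetrized Lyndon factorization of $\chi_w$, forces the dual basis $\{\xi_w\}$ to factor as concatenation products of $\xi_l$'s with $l\in\Lyn Y$. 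Primitivity of each $\xi_l$ is checked through Friedrichs' criterion (Lemma \ref{lemme}) and Lemma \ref{L3}, which give $\langle\chi_u\stuffle_q\chi_v\bv\xi_l\rangle=0$ for $u,v\in Y^+$; the $\{\xi_l\}_{l\in\Lyn Y}$ therefore form a basis of $\LQY$.

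The main obstacle I anticipate is the positivity claim in (1): although triangularity and homogeneity are formal, positivity of the rational coefficients of $\Sigma_w$ depends on a careful analysis of how the inverse of the $\Pi$-transition matrix interacts with the $q$-stuffle product. The cleanest route is likely to prove positivity first for Lyndon generators $\Sigma_l$ by analyzing the adjoint projector $\check\pi_1$ via Proposition \ref{logD}, then propagate it through Proposition \ref{checkSigma}(2) using the non-negativity of the $\stuffle_q$ structure constants.
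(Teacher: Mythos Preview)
Your proposal follows essentially the same strategy as the paper's own proof for all four items: induction on length via the positivity-preserving property of $\stuffle_q$ for (1), the dual-basis reproducing identity followed by regrouping through Definition~\ref{Pi} and Proposition~\ref{checkSigma}(2) for (2), triangularity to pass from $\{\Sigma_w\}$ to $\{\chi_w\}$ for (3), and Friedrichs' criterion (Lemma~\ref{lemme}) together with the vanishing $\scal{\chi_{l_1}\stuffle_q\cdots\stuffle_q\chi_{l_n}}{\xi_l}=0$ for (4).

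Two remarks are worth making. First, on (3): the paper's route is slightly more concrete than your ``triangular substitution'' argument. Rather than invoking an abstract change of generators $\Sigma_l\leftrightarrow l$, the paper writes $\Sigma_w=\chi_w+\chi'_w$ where $\chi'_w$ collects all cross-terms in the expansion of $\Sigma_{l_1}^{\stuffle_q i_1}\stuffle_q\cdots\stuffle_q\Sigma_{l_k}^{\stuffle_q i_k}$; positivity from (1) then forces $\mathrm{supp}(\chi_w)\subseteq\mathrm{supp}(\Sigma_w)\subseteq\{v:v\le w\}$ and $\scal{\chi_w}{w}=1$, so $\{\chi_w\}$ is lower unitriangular and hence a basis. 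This makes the dependence of (3) on (1) explicit. Second, on (1): the obstacle you flag---how to obtain positivity of $\Sigma_l$ for Lyndon $l$ of length $\ge 2$, where Proposition~\ref{checkSigma}(2) is tautological---is real, and the paper's one-line proof is no more detailed than yours on this point. The missing recursion is supplied only later, in Proposition~\ref{increasinglyndon}, which expresses $\Sigma_l$ as a $\Q_{\ge 0}[q]$-combination of $y_s\Sigma_{w'}$ with $|w'|<|l|$; your instinct to look for such a device via $\check\pi_1$ and Proposition~\ref{logD} is in the right direction but does not by itself give the needed formula for Lyndon words.
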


\begin{proof}
\begin{enumerate}
\item The proof can be done by induction on the length of $w$ using the fact that the product ${\stuffle_q}$ conserve the property, l'homogenity and rational positivity of the coefficients.

\item  Expressing $w$ in  the basis $\{\Sigma_w\}_{w\in Y^*}$ of the quasi-shuffle algebra and then in the basis $\{\Pi_w\}_{w\in Y^*}$ of the envelopping algebra, we obtain successively
\begin{eqnarray*}
\calD_Y
&=&\sum_{w\in Y^*}\biggl(\sum_{u\in Y^*}\scal{\Pi_u}{w}\Sigma_u\biggr)\otimes w\cr
&=&\sum_{u\in Y^*}\Sigma_u\otimes\biggl(\sum_{w\in Y^*}\scal{\Pi_u}{w}w\biggr)\cr
&=&\sum_{u\in Y^*}\Sigma_u\otimes\Pi_u\cr
&=&\sum_{l_1>\ldots>l_k\atop i_1,\ldots,i_k\ge1}
\Frac{1}{i_1!\ldots i_k!}\Sigma_{l_1}^{{\stuffle_q} i_1}{\stuffle_q}\ldots{\stuffle_q}\Sigma_{l_k}^{{\stuffle_q} i_k}
\otimes\Pi_{l_1}^{i_1}\ldots\Pi_{l_k}^{i_k}\cr
&=&\prod_{l\in\Lyn Y}^{\searrow}\sum_{i\ge0}\frac{1}{i!}\Sigma_l^{{\stuffle_q} i}\otimes\Pi_l^{i}\cr
&=&\prod_{l\in\Lyn Y}^{\searrow}\exp(\Sigma_l\otimes\Pi_l).
\end{eqnarray*}

\item  For  $w=l_1^{i_1}\ldots l_k^{i_k}$ with $l_1,\ldots,l_k\in\Lyn Y$ and $l_1>\ldots>l_k$,
by Proposition \ref{Ptriangulaire}, the proper polynomial  of positive coefficients $\Sigma_w$ is lower triangular~:
\begin{eqnarray*}
\Sigma_w
&=&\frac{1}{i_1!\ldots i_k!}\Sigma_{l_1}^{{\stuffle_q} i_1}{\stuffle_q}\ldots{\stuffle_q}\Sigma_{l_k}^{{\stuffle_q} i_k}\\
&=&w+\sum_{v<w,(v)=(w)}c_vv.
\end{eqnarray*}
In particular, for any $l_j\in\Lyn Y$, $\Sigma_{l_j}$ is lower triangular~:
\begin{eqnarray*}
\Sigma_{l_j}&=&l_j+\sum_{v<l_j,(v)=(l_j)}c_vv.
\end{eqnarray*}
Hence, $\Sigma_w=\chi_w+\chi'_w$,
where $\chi'_w$ is a proper polynomial of $\kY$ of rational positive coefficients.
We deduce then the support of $\chi_w$ contains words which are less than $w$ and $\scal{\chi_w}{w}=1$.
Thus, the proper polynomial $\chi_w$ of rational positive coefficients is lower triangular~:
\begin{eqnarray*}
\chi_w&=&w+\sum_{v<w,(v)=(w)}c_vv,\\
\Rightarrow\quad\forall l\in\Lyn Y,\quad\chi_l&=&l+\sum_{v<l,(v)=(l)}c_vv.
\end{eqnarray*}
It follows then expected results.

\item  By duality, for $w\in Y^*$, the proper polynomial  $\xi_w$ is upper triangular.
In particular, for any $l\in\Lyn Y$, the proper polynomial $\xi_l$ is upper triangular~:
\begin{eqnarray*}
\xi_l&=&l+\sum_{v>l,(v)=(l)}d_vv.
\end{eqnarray*}
Hence, the family  $\{\xi_l\}_{l\in\Lyn Y}$ is free and its elements verify an analogous of the generalized criterion of Friedrichs~:
\begin{itemize}
\item for $w\in\Lyn Y$, one has $\scal{\chi_w}{\xi_l}=\delta_{w,l}$,
\item for $w=l_1\ldots l_n\notin\Lyn Y$ with  $l_1,\ldots,l_n\in\Lyn Y$ and $l_1\ge\ldots\ge l_n$, one has  (since $l\in\Lyn Y$)
\begin{eqnarray*}
\scal{\chi_{l_1}{\stuffle_q}\ldots{\stuffle_q}\chi_{l_n}}{\xi_l}=\scal{\chi_w}{\xi_l}=0.
\end{eqnarray*}
\end{itemize}
The polynomials $\xi_l$'s are primitive. Actually, we have
\begin{eqnarray*}
\Delta_{\ministuffle_q}\xi_l
&=&\sum_{u\in Y^+}\scal{u{\stuffle_q}1_{Y^*}}{\xi_l}u\otimes1
+\sum_{v\in Y^+}\scal{1_{Y^*}{\stuffle_q} v}{\xi_l}1\otimes v
+\sum_{u,v\in Y^+}\scal{u{\stuffle_q} v}{\xi_l}u\otimes v\\
&+&\scal{1_{Y^*}{\stuffle_q}1_{Y^*}}{\xi_l}1\otimes1\cr
&=&\xi_l\otimes1+1\otimes\xi_l.
\end{eqnarray*}
Because, after decomposing the words $u$ and $v$ on the transcendence basis $\{\chi_l\}_{l\in\Lyn Y}$ and by the previous fact, the third sum is vanishing. The last one is also vanishing since the $\xi_l$'s are proper. Hence, it follows the expected result.\end{enumerate}
\end{proof}

\subsection{Determination of $\{\Sigma_l\}_{l\in\Lyn Y}$}
Following \cite{reutenauer}, we call a \emph{standard sequence} of Lyndon words to be a sequence
\begin{eqnarray}\label{StandardSequence}
 S=(l_1,\cdots,l_k), k\ge1
\end{eqnarray}
if for all $i$, either $ l_i$ to be a letter or the standard factorization $\sigma(l_i)=(l_i',l_i'')$ and $l_i''\ge l_{i+1},\cdots,l_n$. Note that a decreasing sequence of Lyndon words is also a standard sequence. A \emph{rise}  of a sequence $S$ is an index $i$ such that $l_i<l_{i+1}$. A \emph{legal rise} of sequence $S$ is a rise of $i$ such that $l_{i+1}\ge l_{i+2},\cdots,l_k$;
 with the legal rise $i$, we define
\begin{eqnarray}\label{labelleft}
     \lambda_i(S)=(l_1,\cdots,l_{i-1},l_il_{i+1},l_{i+2},\cdots,l_n)
     &\mbox{and}&
     \rho_i(S)=(l_1,\cdots,l_{i-1},l_{i+1},l_{i},l_{i+2},\cdots,l_n)
\end{eqnarray}
We denote $S\Rightarrow {T}$ if $T=\lambda_i(S)$ or ${T}=\rho_i(S)$ for some legal rise $i$; and $S\stackrel{*}{\Rightarrow} T$, transitive closure of $\Rightarrow$.

A \emph{derivation tree} $\mathcal{T}(S)$ of $S$ to be a labelled rooted tree with the following properties~: if $S$ is decreasing, then $\mathcal{T}(S)$ is reduced to its root, labelled $S$; if not, $\mathcal{T}(S)$ is the tree with root labelled $S$, with left and right immediate subtree $\mathcal{T}(S')$ and $\mathcal{T}(S'')$, where $S'=\lambda_i(S)$, $S''=\rho_i(S)$ for some legal rise $i$ of $S$; we define $\Pi(S)=\Pi_{l_1}\ldots \Pi_{l_n}$ ($\Pi(S)\neq \Pi_{l_1\ldots l_k}$ because $l_1,\cdots,l_k$ can be not a decreasing sequence).
   
Conversely, we call a \emph{fall} of sequence $S$ is an index $i$ such that $l_1,\cdots,l_i \in Y, l_i>l_{i+1}$. We define
\begin{eqnarray}
\rho^{-1}_i(S)=(l_1,\cdots,l_{i+1},l_i,\cdots,l_n).
\end{eqnarray}

We call a \emph{landmark} of sequence $S$ is an index $i$ such that $l_1,\cdots,l_{i-1}\in Y, l_i\in Y^*\setminus Y$, and we define 
\begin{eqnarray}
\lambda^{-1}_i(S)=(l_1,\cdots,l_{i-1},l_i',l_i'',l_{i+1},\cdots,l_n),
\end{eqnarray}
where $\sigma(l_i)=(l_i',l_i'')$. We will denote by $S\Leftarrow T$ if  $T=\rho^{-1}_i(S)$ or $T=\lambda^{-1}_i(S)$ for some fall or landmark $i$; and $S\stackrel{*}{\Leftarrow} T$, transitive closure of $\Leftarrow$.

Similarly, we call the conversely derivation tree $\mathcal{T}^{-1}(S)$ with root labelled $S$, with left and right immediate subtree $\mathcal{T}^{-1}(S')$ and $\mathcal{T}^{-1}(S'')$, where $S'=\rho^{-1}_i(S)$ for some fall $i$, $S''=\lambda^{-1}_i(S)$ for some landmark $i$.

\begin{lemma}\label{derivationtree}
For each standard sequence $S$, $\Pi(S)$ is the sum of all $\Pi(T)$ for $T$ a leaf in a fixed derivation tree of $S$. 
\end{lemma}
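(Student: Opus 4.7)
The plan is to induct on the height of the derivation tree $\calT(S)$. The base case is when $S=(l_1,\ldots,l_n)$ is a decreasing sequence of Lyndon words: then $\calT(S)$ reduces to a single leaf labelled $S$, and by the very definition of $\Pi_w$ for an arbitrary word $w=l_1^{i_1}\cdots l_k^{i_k}$ (with $l_1>\cdots>l_k$), the quantity $\Pi(S)=\Pi_{l_1}\cdots\Pi_{l_n}$ coincides with the unique leaf contribution. So the claim is immediate.

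For the inductive step, suppose $S$ has a legal rise $i$ chosen at the root of the fixed tree $\calT(S)$, with children $\calT(\lambda_i(S))$ and $\calT(\rho_i(S))$. The key algebraic input is the identity
\begin{eqnarray*}
\Pi_{l_i}\,\Pi_{l_{i+1}} &=& \Pi_{l_i l_{i+1}} + \Pi_{l_{i+1}}\,\Pi_{l_i},
\end{eqnarray*}
which holds because $l_i<l_{i+1}$ are Lyndon so that $l_i l_{i+1}\in\Lyn Y$, and because the legal-rise condition $l_{i+1}\ge l_{i+2},\ldots,l_n$ forces $(l_i,l_{i+1})$ to be the standard factorization of $l_i l_{i+1}$, making $\Pi_{l_i l_{i+1}}=[\Pi_{l_i},\Pi_{l_{i+1}}]$ coincide with Definition \ref{Pi}. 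Multiplying on the left by $\Pi_{l_1}\cdots\Pi_{l_{i-1}}$ and on the right by $\Pi_{l_{i+2}}\cdots\Pi_{l_n}$ yields
\begin{eqnarray*}
\Pi(S) &=& \Pi(\lambda_i(S)) + \Pi(\rho_i(S)).
\end{eqnarray*}

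It remains to check that $\lambda_i(S)$ and $\rho_i(S)$ are themselves standard sequences, so that the induction hypothesis applies to their (strictly smaller) subtrees. For $\lambda_i(S)$, the only new entry is the Lyndon word $l_i l_{i+1}$ whose standard-factorization right factor is $l_{i+1}\ge l_{i+2},\ldots,l_n$ by the legality of $i$; for $\rho_i(S)$, the standard condition at every position $j\ne i$ is unaffected because the tail of subsequent words is the same set, while position $i$ carries $l_{i+1}$ whose standard-factorization right factor (or itself, if a letter) dominates $l_i$ (since $l_i<l_{i+1}$) and $l_{i+2},\ldots,l_n$ (by legality). By induction $\Pi(\lambda_i(S))$ and $\Pi(\rho_i(S))$ are respectively the sum of $\Pi(T)$ over the leaves of $\calT(\lambda_i(S))$ and $\calT(\rho_i(S))$; concatenating these two sums yields the sum over all leaves of $\calT(S)$.

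The main obstacle is the justification of the split identity, and specifically the claim that under the legal-rise condition $(l_i,l_{i+1})$ is the standard factorization of $l_i l_{i+1}$, so that $\Pi_{l_i l_{i+1}}$ is exactly $[\Pi_{l_i},\Pi_{l_{i+1}}]$ and not a further bracketed expression that one would then have to expand. This is a purely combinatorial statement on Lyndon words: the longest proper Lyndon suffix of $l_i l_{i+1}$ is $l_{i+1}$ precisely when $l_{i+1}$ is not the beginning of a longer Lyndon suffix, which is secured by the standard-sequence hypothesis already present in $S$. Once this point is established, the rest of the argument is a clean induction on the tree.
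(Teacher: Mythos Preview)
Your argument is correct and follows exactly the paper's approach: the paper's one-line proof cites the same identity $\Pi_{l_i}\Pi_{l_{i+1}}=[\Pi_{l_i},\Pi_{l_{i+1}}]+\Pi_{l_{i+1}}\Pi_{l_i}=\Pi_{l_il_{i+1}}+\Pi_{l_{i+1}}\Pi_{l_i}$ and the recursive definition of $\calT(S)$, and you have simply unpacked the induction and the preservation of standardness in detail.

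One point of presentation to correct: in your inductive step you write that ``the legal-rise condition $l_{i+1}\ge l_{i+2},\ldots,l_n$ forces $(l_i,l_{i+1})$ to be the standard factorization of $l_i l_{i+1}$''. This is not the right hypothesis. The legal-rise condition compares $l_{i+1}$ with \emph{later} entries and says nothing about the internal structure of $l_i$; by itself it does not rule out a longer Lyndon suffix of $l_il_{i+1}$ (e.g.\ with $l_i=abac$, $l_{i+1}=b$ one has $\sigma(abacb)=(ab,acb)\ne(l_i,l_{i+1})$). What actually secures $\sigma(l_il_{i+1})=(l_i,l_{i+1})$ is the \emph{standard-sequence} condition at position $i$: either $l_i\in Y$, or $\sigma(l_i)=(l_i',l_i'')$ with $l_i''\ge l_{i+1}$. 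You do invoke this correctly in your final paragraph, so the proof stands; just replace the earlier misattribution so the two passages agree.
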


\begin{proof}
This is a consequence of the definitions of $\lambda_i(S)$ and $\rho_i(S)$ on  (\ref{labelleft}), of $\mathcal{T}(S)$ and $\Pi(S)$, and of the identity
$\Pi_{l_i}\Pi_{l_{i+1}}=[\Pi_{l_i},\Pi_{l_{i+1}}]+\Pi_{l_{i+1}}\Pi_{l_i}=\Pi_{l_il_{i+1}}+\Pi_{l_{i+1}}\Pi_{l_i}$.
\end{proof}

\begin{example}
$\Pi(y_4,y_2,y_1)=\Pi_{y_4y_2y_1}+\Pi_{y_2y_1}\Pi_{y_4}+\Pi_{y_4y_1y_2}+\Pi_{y_2}\Pi_{y_4y_1}+\Pi_{y_1}\Pi_{y_4y_2}+\Pi_{y_1}\Pi_{y_2}\Pi_{y_4},$ we can see the following diagram (note that $y_4<y_2<y_1$)
\begin{figure}[ht]
\begin{center}
\includegraphics[height=4.8cm]{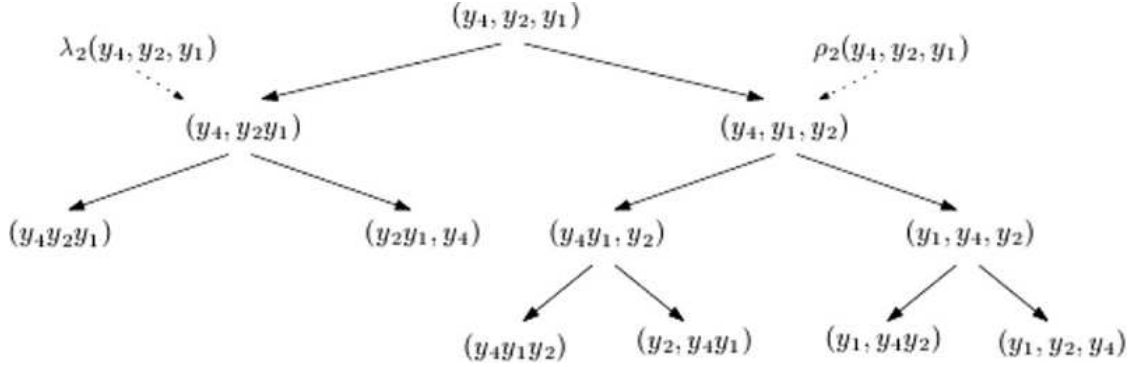}
\end{center}
\caption{ Derivation tree $\mathcal{T}(y_4,y_2,y_1)$}
\end{figure}
\end{example}

\begin{proposition}\label{increasinglyndon}
\begin{enumerate}
\item For any Lyndon word $y_{s_1}\ldots y_{s_k}$, we have
\begin{eqnarray*}\label{SigmaOfLyndonWord}
\Sigma_{y_{s_1}\ldots y_{s_k}}
&=&\sum_{{\{s'_1,\cdots,s'_i\}\subset \{s_1,\cdots,s_k\}, l_1\ge\cdots\ge l_n\in\Lyn Y}\atop (y_{s_1}\cdots y_{s_k})
\stackrel{*}{\Leftarrow} (y_{s'_1},\cdots,y_{s'_n},l_1,\cdots,l_n)}{\frac{q^{i-1}}{i!}y_{s'_1+\cdots+s'_i}
\Sigma_{l_1\cdots l_n}}.
\end{eqnarray*}
\item In special case, if $y_{s_1}\le\cdots\le y_{s_k}$ then
\begin{eqnarray*}
\Sigma_{y_{s_1}\ldots y_{s_k}}
&=&\sum_{i=1}^k{\frac{q^{i-1}}{i!}y_{s_1+\cdots+s_i}\Sigma_{y_{s_{i+1}}\ldots y_{s_k}}}.
\end{eqnarray*}
\end{enumerate}
\end{proposition}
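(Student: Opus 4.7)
My plan is to prove both parts by induction on the length $k$ of the Lyndon word $l=y_{s_1}\ldots y_{s_k}$, with the base case $k=1$ being immediate from Proposition \ref{checkSigma}(3). The strategy is to verify each identity by duality: I pair both sides with $\Pi_v$ for arbitrary $v\in Y^*$ and show that the result reduces to $\delta_{v,l}$, which, by Definition \ref{Sigma}, characterizes $\Sigma_l$ uniquely.

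The three ingredients driving the induction are: Proposition \ref{logD}(2), which supplies the expansion $y_s=\sum_{j\ge 1}\frac{q^{j-1}}{j!}\sum_{s'_1+\cdots+s'_j=s}\Pi_{y_{s'_1}}\ldots\Pi_{y_{s'_j}}$ of any letter in the PBW basis and is the source of the coefficients $q^{i-1}/i!$; Lemma \ref{derivationtree}, which rewrites any product $\Pi(S)$ along a standard sequence as a sum, over leaves of its derivation tree, of $\Pi_w$'s indexed by decreasing Lyndon factorizations; and the duality $\scal{\Sigma_w}{\Pi_v}=\delta_{w,v}$ combined with Proposition \ref{checkSigma}(2), which takes care of the $\Sigma_{l_1\cdots l_n}$ factor inductively. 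For part (1), I view $l$ as a length-one standard sequence and interpret the inverse derivations $\stackrel{*}{\Leftarrow}$ as the reverse construction of $\Pi_l$: landmarks undo standard factorizations of Lyndon brackets and falls undo swaps of adjacent letters. Walking $\mathcal{T}^{-1}(l)$ produces exactly the tuples $(y_{s'_1},\ldots,y_{s'_i},l_1,\ldots,l_n)$ indexing the sum; expanding each prefix letter $y_{s'_1+\cdots+s'_i}$ via Proposition \ref{logD}(2) contributes the PBW monomial $\frac{q^{i-1}}{i!}\Pi_{y_{s'_1}}\cdots\Pi_{y_{s'_i}}$, and Lemma \ref{derivationtree} reassembles this with $\Pi_{l_1}\cdots\Pi_{l_n}$ back into $\Pi_l$, while the non-matching expansions vanish after pairing thanks to Lemma \ref{L3} and the primitivity of $\Pi_{y_s}=\pi_1(y_s)$ established in Lemma \ref{L2}.

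Part (2) then follows as the specialization in which $\mathcal{T}^{-1}(l)$ collapses to a single chain: when $y_{s_1}\le\cdots\le y_{s_k}$, at each stage the only available inverse operation is the landmark splitting off the leading letter, so the leaves reduce to $(y_{s_1},\ldots,y_{s_i},y_{s_{i+1}}\ldots y_{s_k})$ for $i=1,\ldots,k$, yielding the compact recursion $\Sigma_l=\sum_{i=1}^k\frac{q^{i-1}}{i!}y_{s_1+\cdots+s_i}\Sigma_{y_{s_{i+1}}\ldots y_{s_k}}$. The principal obstacle I anticipate lies in part (1): matching the multiplicity of each leaf of $\mathcal{T}^{-1}(l)$ against the multinomial coefficient $q^{i-1}/i!$ without producing spurious cross-terms when pairing against $\Pi_v$; this delicate bookkeeping will rely on Lemma \ref{L3}(3) together with the lower-triangularity of $\Sigma_w$ from Proposition \ref{Striangulaire}.
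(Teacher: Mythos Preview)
Your approach is essentially the paper's: both arguments verify the identity by duality, expand a leading letter via Proposition~\ref{logD}(2), and use Lemma~\ref{derivationtree} together with the inverse derivation tree $\stackrel{*}{\Leftarrow}$ to identify which standard sequences $S$ contribute. The paper's execution is slightly more direct than what you outline---no induction on $k$, and no need for Lemma~\ref{L3} or Proposition~\ref{Striangulaire}: since the leaves $T$ of any derivation tree are decreasing Lyndon sequences and $l=y_{s_1}\cdots y_{s_k}$ is a single Lyndon word, $\scal{\Sigma_l}{\Pi(T)}=\delta_{T,(l)}$ immediately, so each contributing $S$ produces a unique leaf with coefficient $\alpha_T=1$ and your anticipated multiplicity obstacle does not arise.
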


\begin{proof}
At first, we remark this Proposition is equivalent to saying that for any word $u$ and any letter $y_s$,
\begin{eqnarray*}
\scal{\Sigma_{y_{s_1}\ldots y_{s_k}}}{y_su}
&=&\sum_{{\{s'_1,\cdots,s'_i\}\subset \{s_1,\cdots,s_k\}, l_1\ge\cdots\ge l_n\in\Lyn Y}\atop (y_{s_1}\cdots y_{s_k})
\stackrel{*}{\Leftarrow} (y_{s'_1},\cdots,y_{s'_n},l_1,\cdots,l_n)}
\frac{q^{i-1}}{i!}\delta_{s'_1+\cdots+s'_i,s}\scal{\Sigma_{l_1\ldots l_n}}{u}.
 \end{eqnarray*}
One has
\begin{eqnarray*}
u=\sum_{w\in Y^*}\scal{\Sigma_w}{u}\Pi_w,
\end{eqnarray*}
multiplying the two members by $y_s$ and by Proposition \ref{logD}, one obtains
\begin{eqnarray*}
y_su&=&\sum_{w\in Y^*}\scal{\Sigma_w}{u}\biggl(\sum_{i\ge1}\frac{q^{i-1}}{i!}
\sum_{s'_1+\cdots +s'_i=s}\Pi_{y_{s'_1}}\ldots\Pi_{y_{s'_i}}\biggr)\Pi_w\cr
&=&\sum_{w\in Y^*}
\scal{\Sigma_w}{u}\sum_{i\ge1}\frac{q^{i-1}}{i!}\sum_{s'_1+\cdots +s'_i=s}\Pi_{y_{s'_1}}\ldots\Pi_{y_{s'_i}}\Pi_w,\\
\Rightarrow
\scal{\Sigma_{y_1\ldots y_k}}{y_su}
&=& \sum_{w\in Y^*}\scal{\Sigma_w}{u}\sum_{i\ge1}\frac{q^{i-1}}{i!}
\sum_{s'_1+\cdots +s'_i=s}\scal{\Sigma_{y_1\cdots y_k}}{\Pi_{y_{s'_1}}\ldots\Pi_{y_{s'_i}} \Pi_w}.
\end{eqnarray*}
For each $w$ fixed, we write $w$ form factorization of Lyndon words $w=l_1\ldots l_n, l_1\ge\cdots\ge l_n$, then we have $S:=(y_{s'_1},\cdots,y_{s'_i},l_1,\cdots,l_n)$ is a standard sequence, so we obtain from Lemma \ref{derivationtree} \begin{eqnarray*}
\Pi(S)=\Pi(y_{s'_1},\cdots,y_{s'_i},l_1,\cdots,l_n)=\sum_{S\stackrel{*}{\Rightarrow}T}{\alpha_T\Pi(T)}.
\end{eqnarray*}
Consequently,
\begin{eqnarray*}
\scal{\Sigma_{y_1\ldots y_k}}{y_su}
&=&\sum_{l_1\ge\cdots\ge l_n\in\Lyn Y}\scal{\Sigma_{l_1\ldots l_n}}{u} \sum_{i\ge1}\frac{q^{i-1}}{i!}
\sum_{s'_1+\cdots +s'_i=s\atop (y_{s_1},\cdots,y_{s_i},l_n,\cdots,l_n)\stackrel{*}{\Rightarrow}T}\alpha_T\scal{\Sigma_{y_1\ldots y_k}}{\Pi(T)}.
\end{eqnarray*}
Note that, the leaves $T$'s of derivation tree $\mathcal{T}(S)$ are decreasing sequences of Lyndon words with length $\ge2$ except leaves form $T=(l)$, where $l\in\Lyn Y$. Therefore $\langle \Sigma_{y_1\ldots y_k}| \Pi(T)\rangle\neq 0$ if $T=(y_{s_1}\ldots y_{s_k})$. By maps $\rho^{-1}$ and $\lambda^{-1}$, we construct a conversely derivation tree from the standard sequence of one Lyndon word $S=(y_{s_1}\ldots y_{s_k})$, we take standard sequences form $(y_{s1},\cdots,y_{s_i},l_n,\cdots,l_n), i\ge1$; at that time, for each $S$ of these sequences, we get unique leaf $T=(y_{s_1}\ldots y_{s_k})$ in the derivation tree $\mathcal{T}(S)$, it mean $\alpha_T=1$. Thus, we get the expected result.
  
In other words, if $y_{s_1}\le\cdots\le y_{s_k}$ then the standard sequence $(y_{s_1}\ldots y_{s_k})$ may only be a leaf of a derivation tree $\mathcal{T}(S)$ after applying map $\lambda_i$ more times, we imply that
$\scal{ \Sigma_{y_{s_1}\ldots y_{s_k}}}{\Pi_{y_{s'_1}}\ldots\Pi_{y_{s'_i}} \Pi_w}\neq0$ if and only if
$y_{s_1}\ldots y_{s_k}=y_{s_1'}\ldots y_{s'_i}l_1\ldots l_n$, then $y_{s_1}=y_{s'_1},\cdots, y_{s'_i}=y_{s_i}$ and $y_{s_{i+1}}\ldots y_{s_k}=l_1\ldots l_n$. Hence
\begin{eqnarray*}
\langle \Sigma_{y_{s_1}\ldots y_{s_k}}|\Pi_{y_{s'_1}}\ldots\Pi_{y_{s'_i}} \Pi_w\rangle
&=&\delta_{s_1+\cdots+s_i,s}\delta_{y_{s_{i+1}}\ldots y_{s_k},w},
\end{eqnarray*}
we thus get
\begin{eqnarray*}
\scal{\Sigma_{y_{s_1}\ldots y_{s_k}}}{y_su}
&=&\frac{q^{i-1}}{i!}\delta_{s_1+\cdots+s_i,s}\scal{ \Sigma_{y_{s_{i+1}}\ldots y_{s_k}}}{u}.
\end{eqnarray*}
\end{proof}

\subsection{Examples with Maple}
\begin{eqnarray}
\Pi_{y_1}&=&y_1,\\
\Pi_{y_2}&=&y_2-\frac{q}{2}y_1^2,\\
\Pi_{y_2y_1}&=&y_2y_1-y_1y_2,\\
\Pi_{y_3y_1y_2}
&=&y_3y_1y_2-\frac{q}{2}y_3y_1^3-qy_2y_1^2y_2+\frac{q^2}4y_2y_1^4-y_1y_3y_2+\frac{q}2y_1y_3y_1^2\cr
&+&\frac{q}{2}y_1^2y_2^2-\frac{q^2}2y_1^2y_2y_1^2-y_2y_3y_1+\frac{q}2y_2^2y_1^2+y_2y_1y_3+\frac{q}2y_1^2y_3y_1-\frac{q}2y_1^3y_3+\frac{q^2}4y_1^4y_2,\\
\Pi_{y_3y_1y_2y_1}
 &=&y_3y_1y_2y_1-y_3y_1^2y_2-\frac{q}2y_2y_1^2y_2y_1-y_1y_3y_2y_1+y_1y_3y_1y_2+\frac{q}2y_1^2y_2^2y_1\cr
&-&\frac{q}2y_1^2y_2y_1y_2-y_2y_1y_3y_1+\frac{q}2y_2y_1y_2y_1^2+y_2y_1^2y_3+y_1y_2y_3y_1\cr
&-&\frac{q}2y_1y_2^2y_1^2-y_1y_2y_1y_3+\frac{q}2y_1y_2y_1^2y_2.\\
\cr
\Sigma_{y_1}&=&y_1,\\
\Sigma_{y_2}&=&y_2,\\
\Sigma_{y_2y_1}&=&y_2y_1+\frac{q}2y_3,\\
\Sigma_{y_3y_2y_1}&=&y_3y_1y_2+y_3y_2y_1+qy_3^2+\frac{q}2y_4y_2+\frac{q^2}{3}y_6+\frac{q}2y_5y_1,\\
\Sigma_{y_3y_1y_2y_1}
&=&2y_3y_2y_1^2+qy_3y_2^2+y_3y_1y_2y_1+\frac{3q}2y_3^2y_1+\frac{q}2y_3y_1y_3+\frac{q^2}2y_3y_4+\frac{q}2y_4y_2y_1\cr
&+&\frac{q^2}4y_4y_3+qy_5y_1^2+\frac{q^2}2y_5y_2+\frac{q^2}2y_6y_1+\frac{q^3}8y_7.
\end{eqnarray}
\section{Conclusion}

Since the pioneering works of Sch\"utzenberger and Reutenauer \cite{schutz,reutenauer}, the question of computing bases in duality (maybe at the cost of a more cumbersome procedure, but without inverting a Gram matrix) remained open in the case of cocommutative deformations of the shuffle product. We have given such a procedure, based on the computation of $\log_*(I)$ on the letters which allows a great simplification for an interpolation between shuffle and stuffle products (this interpolation reduces to the shuffle for $q=0$ and the stuffle for $q=1$). Our algorithm boils down to the classical one in the case when $q=0$.  In the next framework, this product will be continuously deformed, in the most general way but still commutative (see \cite{Enjalbert-HNM} for examples).

\end{document}